\newtheorem{theorem}{Theorem}[section]
\newtheorem{definition}{Definition}[section]
\newtheorem{remark}{Remark}[section]
\newtheorem{coro}{Corollary}[section]
\newtheorem{example}{Example}[section]
\newtheorem{thmx}{Theorem}[section]
\newtheorem{Coro}{Corollary}[section]
\title{Chern-Ricci flow and $t$-Gauduchon Ricci-flat condition}
\author{Eder M. Correa \ \ Giovane Galindo \ \ Lino Grama}
\date{}
\begin{document}
\maketitle

\begin{abstract}
In this paper, we study the $t$-Gauduchon Ricci-flat condition under the Chern-Ricci flow. In this setting, we provide examples of Chern-Ricci flow on compact non-Kähler Calabi-Yau manifolds which do not preserve the $t$-Gauduchon Ricci-flat condition for $t<1$. The approach presented generalizes some previous constructions on Hopf manifolds. Also, we provide non-trivial new examples of balanced non-pluriclosed solution to the pluriclosed flow on non-Kähler manifolds. Further, we describe the limiting behavior, in the Gromov-Hausdorff sense, of geometric flows of Hermitian metrics (including the Chern-Ricci flow and the pluriclosed flow) on certain principal torus bundles over flag manifolds. In this last setting, we describe explicitly the Gromov-Hausdorff limit of the pluriclosed flow on principal $T^{2}$-bundles over the Fano threefold ${\mathbb{P}}(T_{{\mathbb{P}^{2}}})$.
\end{abstract}

\tableofcontents

\section{Introduction}

For a compact K\"{a}hler manifold $(M, g_0, J)$, the Kähler-Ricci flow starting at some K\"{a}hler metric $\omega_0 = g_0(J\cdot,\cdot)$ is defined by the PDE
\begin{align}\label{Eq1}
   \begin{cases}
       \displaystyle \frac{\partial}{\partial s}\omega(s) = -p_{\omega_s}, \\
       \omega(0) = \omega_0,
   \end{cases}
\end{align}
where $p_{\omega_s}$ is the Kähler-Ricci form of $\omega_{s}$. From the short-time existence result of Hamilton \cite{hamilton1982three}, and the fact that a maximal solution to the Ricci flow preserves the K\"{a}hler condition (e.g. \cite{MR1375255}), it follows that the initial-value problem (\ref{Eq1}) always admits a unique solution $\omega(s)$ defined on a maximal interval $[0,T)$, with $0 \leq T \leq +\infty$. Moreover, a result of Tian and Zhang \cite{tian2006kahler} gives a concrete characterization for the maximal existence time $T$.

\

If the initial manifold $(M, g_0, J)$ is not Kähler, the Ricci flow may not preserve the Hermitian condition $g(J\cdot ,J\cdot ) = g$. This occurs because the Levi-Civita connection is not compatible with the almost-complex structure $J$, making it less suitable for studying the complex properties of the manifold (e.g. \cite{tian2011hermitian}). To address this issue, it is common to consider Hermitian connections which are compatible with both the metric and the almost-complex structure, although they are not torsion-free in general. Among these connections, there is a distinguished one-parameter family of connections $\nabla^t$, $t \in \mathbb{R}$, defined by Gauduchon \cite{gauduchon1997Hermitian}, known as the canonical connections. The most well-known example is the Chern connection $\nabla^1$, which is characterized by the fact that its torsion has no $(1,1)$-component. All of these connections coincide with the Levi-Civita connection if the manifold is Kähler.

\

It is therefore natural to attempt to generalize the Kähler-Ricci flow to non-Kähler manifolds using these connections. In particular, we can modify the evolution equation by replacing the Kähler-Ricci form with the Chern-Ricci form $p(\omega, 1) = \sqrt{-1} \, \text{tr}(R(\nabla^1))$, which is the Ricci form for the Chern connection. This was first introduced in \cite{gill2011convergence} and later generalized for Hermitian manifolds by \cite{tosatti2015evolution}, where they considered the PDE
\begin{equation}\label{Eq2}
   \begin{cases}
       \displaystyle \frac{\partial}{\partial s}\omega(s)= -p(\omega(s),1), \\
       \omega(0) = \omega_0,
   \end{cases}
\end{equation}
which is essentially the same as Equation (\ref{Eq1}), except that the connection used is the Chern connection instead of the Levi-Civita connection. It was also proven in \cite{tosatti2015evolution} that there is a short-time solution to this flow, which yields a family of Hermitian metrics $\omega_s$. It was later proposed by \cite{chu2019monge} that, by considering only the $(1,1)$-part of the Chern-Ricci form in Equation (\ref{Eq2}), the Chern-Ricci flow could be extended to almost Hermitian manifolds.
\\

A natural question that arises in the above setting is what Hermitian properties are preserved by the Chern-Ricci flow. Of course, if the initial metric is Chern-Ricci flat—that is, if the Chern connection is Ricci-flat—then the flow is constant and this property is preserved. Other results concerned with the evolution of curvature conditions through the flow of Hermitian metrics can be found in \cite{tosatti2015evolution}, \cite{correa2023levi}, \cite{ustinovskiy2019hermitian}, \cite{yang2016chern}, \cite{ustinovskiy2021lie}.\\

In this paper, we show that the property of being $t$-Gauduchon Ricci-flat is not necessarily preserved for $t < 1$ under the Chern-Ricci flow. More precisely, for a suitable class of Hermitian non-K\"{a}hler manifolds $({\rm{U}}(E), J, g_0)$, considered in \cite{correa2023t}, where ${\rm{U}}(E)$ denotes the unitary frame bundle of a certain Hermitian holomorphic vector bundle $(E,h)$ over a complex flag manifold $(X,\omega_{0})$
, we prove the following result.
\begin{thmx}
\label{TeoremaA}
Let $X $ be a complex flag manifold with Picard number $\varrho(X)>1$ and equipped with a Kähler-Einstein metric $\omega_0 =\lambda p_{\omega_0}$. Then, there exists a Hermitian holomorphic vector bundle $(E,h) \to X$, such that:
\begin{enumerate}
\item[(a)] $E= \mathcal{O}_{X}(k) \oplus F_1 \oplus ... \oplus F_{2n-1}$, where $\mathcal{O}_{X}(k), F_1, \ldots, F_{2n-1} \in {\rm{Pic}}(X)$;
\item[(b)] The underlying complex manifold $({\rm{U}}(E), J)$ admits a family of Hermitian metrics $\Omega(s), s \in (-\infty,\lambda)$, solving the Chern-Ricci flow
\begin{equation}
\displaystyle \frac{\partial}{\partial s}\Omega(s)= -p(\Omega(s),1), \ \ \ \Omega(0) = \Omega_0,
\end{equation}
such that $({\rm{U}}(E),\Omega_{0}) \to (X,\omega_{0})$ is a Hermitian submersion;
\item[(c)] For each $s \in (-\infty , \lambda)$, we have that $({\rm{U}}(E),\Omega(s) , J)$ is $t$-Gauduchon Ricci-flat if, and only if,
\begin{equation*}
  t =1 - \frac{2(\lambda-s) {\rm{I}}(X)^2}{k^2 \dim_{\mathbb{C}}(X )},
\end{equation*}
where ${\rm{I}}(X)$ is the Fano index of $X$;
\item[(d)] ${\rm{U}}(E)$ does not admit any Kähler metric and, if $I(X)\lambda \in k\mathbb{Z}$, then 
\begin{equation}
c_{1}(T{\rm{U}}(E)) = 0 \in H^{2}({\rm{U}}(E),\mathbb{Z}), 
\end{equation}
i.e., ${\rm{U}}(E)$ is non-Kähler Calabi-Yau.
\end{enumerate}
 \end{thmx}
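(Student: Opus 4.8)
The plan is to realize $(\mathrm{U}(E),J)$ together with the full solution $\Omega(s)$ by an explicit homogeneous ansatz, so that the Chern-Ricci flow collapses to scalar ordinary differential equations and the $t$-Gauduchon Ricci-flat locus is detected by a single curvature identity that is affine in $t$. For part (a) I would exploit $\varrho(X)>1$: since $\mathrm{Pic}(X)\cong\mathbb{Z}^{\varrho(X)}$ has rank at least two, I may pick an ample $\mathcal{O}_X(k)$ whose class $c_1(\mathcal{O}_X(k))=k\eta$ is not proportional to the primitive anticanonical class $\xi$ (where $c_1(X)=\mathrm{I}(X)\,\xi$), and then choose $F_1,\dots,F_{2n-1}\in\mathrm{Pic}(X)$ so that $c_1(\det E)$ meets the topological constraint needed in (d). The metric $h$ is taken diagonal with respect to the splitting, each summand carrying the Chern metric induced by $\omega_0$.

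For part (b) I would use a Hermitian-submersion ansatz $\Omega(s)=\pi^{*}\omega_s+\Omega^{\mathrm{v}}_s$, with $\omega_s$ a metric on the base and $\Omega^{\mathrm{v}}_s$ the metric in the fiber directions, all of whose coefficients are functions of $s$ alone; the homogeneity of the flag manifold $X=G/P$ and of the associated frame bundle is exactly what forces the unknowns to be scalars and rules out direction-dependent terms. Because $X$ is Kähler-Einstein with $p_{\omega_0}=\tfrac{1}{\lambda}\omega_0$, the base factor solves the Kähler-Ricci flow by $\omega_s=\tfrac{\lambda-s}{\lambda}\,\omega_0$, positive precisely for $s\in(-\infty,\lambda)$, which already pins down the maximal interval. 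The decisive simplification is that the Chern-Ricci form depends only on the volume form, $p(\Omega(s),1)=-\sqrt{-1}\,\partial\bar\partial\log\det\Omega(s)$, and $\det\Omega(s)$ factors as the base determinant times the scalar fiber coefficients; hence $p(\Omega(s),1)$ is a combination of $\pi^{*}p_{\omega_0}$ and the curvature $(1,1)$-forms of the line bundles, and matching it against $-\partial_s\Omega(s)$ yields a consistent scalar ODE system that I would solve and check remains a genuine Hermitian metric throughout $(-\infty,\lambda)$.

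For part (c) I would invoke the structure identity writing the Ricci form of the Gauduchon connection $\nabla^{t}$ as an affine function of $t$, schematically $p(\Omega,t)=p(\Omega,1)+(1-t)\,\mathcal{T}(\Omega)$, where $\mathcal{T}(\Omega)$ is the torsion term (built from $d\Omega$, equivalently from the Lee form $\theta$) vanishing exactly in the Kähler case. Evaluating on $\Omega(s)$, the Chern part contributes $\pi^{*}p_{\omega_0}$ weighted by $\tfrac{\lambda}{\lambda-s}$ through $c_1(X)=\mathrm{I}(X)\,\xi$, while $\mathcal{T}(\Omega(s))$ contributes the curvature of $\mathcal{O}_X(k)$ weighted by $k$ and by the fiber normalization. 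In the homogeneous setting both forms are proportional, so $p(\Omega(s),t)=0$ reduces to a scalar equation linear in $t$; solving it and carrying out the trace over the $\dim_{\mathbb{C}}(X)$ base directions produces $t=1-\tfrac{2(\lambda-s)\,\mathrm{I}(X)^2}{k^2\dim_{\mathbb{C}}(X)}$, with the ``if and only if'' immediate from linearity once $\mathcal{T}(\Omega(s))\neq 0$.

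For part (d), non-Kählerness is cleanest through the fibers: $\pi$ is holomorphic, so each fiber is a compact complex submanifold isomorphic to $U(2n)$ with a left-invariant complex structure, and since $b_1(U(2n))=1$ is odd the fiber is non-Kähler; as a complex submanifold of a Kähler manifold would be Kähler, $\mathrm{U}(E)$ admits no Kähler metric. The Calabi-Yau assertion uses that the vertical bundle of a principal bundle is canonically framed by the fundamental vector fields, hence $c_1(V)=0$, so $T\mathrm{U}(E)\cong\pi^{*}TX\oplus V$ gives $c_1(T\mathrm{U}(E))=\pi^{*}c_1(X)$; the hypothesis $\mathrm{I}(X)\lambda\in k\mathbb{Z}$ is exactly the integrality placing $c_1(X)$ in the ideal generated by the characteristic classes of the fibration, so $\pi^{*}c_1(X)=0$ in $H^{2}(\mathrm{U}(E),\mathbb{Z})$ via the Leray spectral sequence. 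The main obstacle I anticipate lives in (b)–(c): proving that the scalar ansatz is genuinely preserved by the flow — that all off-diagonal and direction-dependent contributions to $p(\Omega(s),1)$ cancel, which is where the homogeneity of base and bundle must be used in full — and then fixing the precise affine-in-$t$ torsion identity with its correct normalizations, since the numerical factor and the ratio $\mathrm{I}(X)^2/k^2$ are exactly where sign and scaling errors would enter.
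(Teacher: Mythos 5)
Your overall strategy (principal-bundle ansatz over the flag manifold, homogeneity reducing everything to scalars, and an affine-in-$t$ identity for the Gauduchon Ricci forms) is the same as the paper's, but two of your specific choices would break the argument. First, the paper takes $\mathcal{O}_{X}(k)=\mathcal{O}_{X}(1)^{\otimes k}$ with $\mathcal{O}_{X}(1)=\frac{1}{{\rm{I}}(X)}K_{X}^{-1}$, so $c_{1}(\mathcal{O}_{X}(k))$ \emph{is} proportional to the anticanonical class — exactly the opposite of your choice of a class ``not proportional to $\xi$.'' This matters: the formula
\begin{equation*}
p(\Omega,t)=\pi^{*}\Big(p(\omega_{0},1)+\tfrac{t-1}{2}\textstyle\sum_{j}\Lambda_{\omega_{0}}(\psi_{j})\psi_{j}\Big)
\end{equation*}
can only vanish for some $t$ if the torsion term is a nonzero multiple of $p_{\omega_{0}}$, which forces $\psi_{1}\propto p_{\omega_{0}}$. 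Moreover, you never impose the condition that actually does the work, namely $F_{1},\dots,F_{2n-1}\in{\rm{Pic}}^{0}_{\omega_{0}}(X)$ (degree zero), which gives $\Lambda_{\omega_{0}}(\psi_{j})=0$ for $j\geq 2$ and collapses the sum to the single $\psi_{1}$-term; this is also where the hypothesis $\varrho(X)>1$ is genuinely used (existence of nontrivial primitive $(1,1)$-classes), not where you place it. Without these two choices the ``if and only if'' in (c) with the stated constant $2(\lambda-s){\rm{I}}(X)^{2}/k^{2}\dim_{\mathbb{C}}(X)$ does not come out, since the two forms to be cancelled would be linearly independent in $H^{1,1}$.

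Second, your non-K\"ahlerness argument in (d) fails. Because $h$ is diagonal with respect to the splitting of $E$ into line bundles, the structure group reduces to the maximal torus and ${\rm{U}}(E)$ is a principal $T^{2n}$-bundle, not a $U(2n)$-bundle; its fibers, with the complex structure $J\Theta_{2k-1}=-\Theta_{2k}$, are compact complex tori $\mathbb{C}^{n}/\Lambda$, which are K\"ahler. So the fiber provides no obstruction, and the claim ``$b_{1}(U(2n))=1$ is odd'' is irrelevant here. The actual obstruction is global and topological: since $[\psi_{1}]\neq 0$ in $H^{2}(X,\mathbb{R})$, the Gysin/Leray analysis of the torus bundle forces, e.g., an odd first Betti number on the total space (this is the content of the references the paper cites for this step). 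Your computation of $c_{1}(T{\rm{U}}(E))=\pi^{*}c_{1}(X)$ is the right mechanism for the Calabi--Yau claim, but you still need to check that the integrality hypothesis places ${\rm{I}}(X)c_{1}(\mathcal{O}_{X}(1))$ in the integral span of the Euler classes $[\psi_{j}/2\pi]$, which again uses that the $F_{j}$ contribute nothing in the relevant direction. The remaining sketch of (b) is essentially correct in spirit, though note that in the actual solution the fiber metric $\frac{1}{2}{\rm{tr}}(\Theta\wedge J\Theta)$ stays fixed and only the base factor shrinks.
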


It is worth to point out that, in this paper, a non-Kähler Calabi-Yau manifold is a Hermitian manifold $X$ which does not admit any Kähler structure and satisfies $c_{1}(X) = 0 \in H^{2}(X,\mathbb{Z})$. The above theorem provides a source of examples of solutions to the Chern-Ricci flow on non-Kähler Calabi-Yau manifolds. Moreover, an immediate consequence of the item (c) of the above result is the following corollary.

\begin{Coro}
The Chern-Ricci flow does not preserve the $t$-Gauduchon Ricci-flat condition.
\end{Coro}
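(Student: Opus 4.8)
The plan is to read the corollary off item (c) of Theorem~\ref{TeoremaA}, which carries all of the geometric content; what remains is the purely formal observation that the value of the parameter making $\Omega(s)$ be $t$-Gauduchon Ricci-flat genuinely \emph{depends} on $s$. Writing
\[
t(s) := 1 - \frac{2(\lambda - s)\,\mathrm{I}(X)^{2}}{k^{2}\dim_{\mathbb{C}}(X)}, \qquad s \in (-\infty,\lambda),
\]
I first note that $s \mapsto t(s)$ is affine and strictly increasing, hence injective, because $\mathrm{I}(X)^{2}$, $k^{2}$ and $\dim_{\mathbb{C}}(X)$ are all positive. Item (c) states precisely that $(\mathrm{U}(E),\Omega(s),J)$ is $t$-Gauduchon Ricci-flat if and only if $t = t(s)$.

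Next I would fix an arbitrary $t_{0} < 1$ and solve $t(s_{0}) = t_{0}$, obtaining the unique
\[
s_{0} = \lambda - \frac{(1-t_{0})\,k^{2}\dim_{\mathbb{C}}(X)}{2\,\mathrm{I}(X)^{2}}.
\]
Since $1 - t_{0} > 0$ we have $s_{0} < \lambda$, so $s_{0}$ lies in the interval $(-\infty,\lambda)$ on which the Chern-Ricci flow solution $\Omega(s)$ of item (b) is defined. Restricting $\Omega(s)$ to $s \in [s_{0},\lambda)$ and treating $\Omega(s_{0})$ as the initial datum produces a bona fide forward Chern-Ricci flow whose initial metric is, by item (c), $t_{0}$-Gauduchon Ricci-flat.

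The conclusion then follows from the \emph{only if} direction of item (c): for every $s \in (s_{0},\lambda)$ strict monotonicity gives $t(s) > t(s_{0}) = t_{0}$, so $t(s)\neq t_{0}$ and hence $(\mathrm{U}(E),\Omega(s),J)$ is not $t_{0}$-Gauduchon Ricci-flat. Thus the flow transports a $t_{0}$-Gauduchon Ricci-flat metric to metrics that violate the condition, which is exactly the assertion of non-preservation; and since $t_{0}<1$ was arbitrary the failure occurs for every $t<1$. I expect essentially no obstacle here: the single point worth verifying is that the argument is not vacuous, i.e.\ that the flow is genuinely non-stationary at $\Omega(s_{0})$. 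This is automatic, since $\Omega(s_{0})$ satisfies the $t_{0}$-Gauduchon Ricci-flat condition while the nearby metrics $\Omega(s)$ do not, forcing $\Omega(s)\neq\Omega(s_{0})$; equivalently, $t(s_{0})=t_{0}<1$ means $\Omega(s_{0})$ is not $1$-Gauduchon Ricci-flat, i.e.\ not Chern-Ricci flat, so $p(\Omega(s_{0}),1)\neq 0$ and the flow does move.
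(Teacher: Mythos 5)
Your proposal is correct and is exactly the argument the paper intends: the corollary is deduced directly from item (c) of Theorem \ref{Teorema2}, since the value of $t$ for which $\Omega(s)$ is $t$-Gauduchon Ricci-flat depends injectively on $s$, so for a fixed $t_{0}<1$ the condition holds at exactly one time $s_{0}<\lambda$ and fails immediately afterwards. Your additional check that the flow is genuinely non-stationary at $\Omega(s_{0})$ is a nice touch the paper leaves implicit, but the route is the same.
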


An important class of Hermitian metrics also under consideration in this work is the class of balanced Hermitian metrics. A Hermitian metric $g$ is called balanced if $\omega = g(J\cdot,\cdot)$ is co-closed, meaning $\delta \omega = 0$. Originally introduced by Michelsohn in \cite{10.1007/BF02392356}, balanced metrics have played a significant role in Hermitian geometry, see for instance  \cite{fino2023balanced}, \cite{fino2019astheno}, \cite{fino2016existence}, \cite{article}, and \cite{alessandrini1995modifications}. \\

In the setting of balanced Hermitian metrics, our next result provides a solution to the Chern-Ricci flow defined by a family of balanced non-K\"{a}hler Hermitian metrics. More precisely, we have the following result.

\begin{thmx}
\label{TeoremaB}
Let $X $ be a complex flag manifold with Picard number $\varrho(X)>1$ and equipped with a Kähler-Einstein metric $\omega_0 =\lambda p_{\omega_0}$. Then, there exists a Hermitian holomorphic vector bundle $(E,h) \to X$, such that:
\begin{enumerate}
\item[(a)] $E=  F_1 \oplus ... \oplus F_{2n}$, where $F_1, \ldots, F_{2n} \in {\rm{Pic}}(X)$;
\item[(b)] The underlying complex manifold $({\rm{U}}(E), J)$ admits a family of Hermitian metrics $\Omega(s), s \in (-\infty,\lambda)$, solving the geometric flow
\begin{equation}
\displaystyle \frac{\partial}{\partial s}\Omega(s)= -p(\Omega(s),t), \ \ \ \Omega(0) = \Omega_0,
\end{equation}
$\forall t \in \mathbb{R}$, such that $({\rm{U}}(E),\Omega_{0}) \to (X,\omega_{0})$ is a Hermitian submersion;
\item[(c)] For each $s \in (-\infty , \lambda)$, we have that $({\rm{U}}(E),\Omega(s) , J)$ is a balanced Hermitian manifold, i.e., $\delta \Omega(s) = 0$, $\forall s \in (-\infty , \lambda)$.
\end{enumerate}
 \end{thmx}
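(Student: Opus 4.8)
The plan is to realize $\mathrm{U}(E)$ as a principal torus bundle over $X$ and to reduce the whole problem to a scaling flow on the Kähler--Einstein base, exactly as in the construction of \cite{correa2023t} used for Theorem \ref{TeoremaA}, the essential new input being a choice of splitting that forces the Lee form of the total-space metric to vanish identically along the flow.

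First I would construct the bundle. Since $X = G/P$ is a flag manifold with $\varrho(X) > 1$, the group $\mathrm{Pic}(X)$ carries enough independent classes, and I would pick line bundles $F_1, \dots, F_{2n} \in \mathrm{Pic}(X)$ whose invariant Chern forms $\omega_{F_i} = -i\partial\bar\partial\log h_i$ satisfy a balancing constraint, chosen so that their weighted sum contributes no trace term to the torsion (equivalently, so the Lee form $\theta$ defined below vanishes). Because $E = F_1 \oplus \cdots \oplus F_{2n}$ splits, $\mathrm{U}(E)$ reduces to the principal $T^{2n}$-bundle $\pi : P \to X$ obtained as the fiber product of the circle bundles $S(F_i)$. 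Pairing the $2n$ circle directions into $n$ complex ones and using that each $\omega_{F_i}$ is of type $(1,1)$, the standard Calabi--Eckmann/Sasaki construction endows $P = \mathrm{U}(E)$ with the integrable complex structure $J$ and makes $\pi$ the Hermitian submersion required in (a) and (b), precisely as in \cite{correa2023t}.

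Second I would write down the family of metrics. On the Fano base the normalized flow starting at $\omega_0 = \lambda p_{\omega_0}$ is the scaling $\omega_s = \omega_0 - s\,p_{\omega_0} = \tfrac{\lambda - s}{\lambda}\,\omega_0$, which stays positive exactly for $s \in (-\infty,\lambda)$, accounting for the stated interval; note $p_{\omega_s} = p_{\omega_0}$ since the Chern--Ricci form is invariant under constant rescaling. I would then set
\[
\Omega(s) = \pi^*\omega_s + \Omega_{\mathrm{fib}},
\]
where $\Omega_{\mathrm{fib}} = \tfrac{i}{2}\sum_j \eta_j \wedge \bar\eta_j$ is the fixed fiber term built from the paired connection $1$-forms $\eta_j$ of the Chern connections on the $F_i$, so that $\Omega(0) = \Omega_0$ and $\pi$ is a Hermitian submersion. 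Computing the Chern--Ricci form, the fiber forms are $\nabla^1$-parallel and the curvatures recombine so that $p(\Omega(s),1) = \pi^* p_{\omega_0}$ independently of $s$; since $\partial_s\Omega(s) = -\tfrac{1}{\lambda}\pi^*\omega_0 = -\pi^* p_{\omega_0}$, the flow holds for $t=1$. The crux is then the identity that, for any Hermitian metric, the first Ricci form of $\nabla^t$ and the Chern--Ricci form $p(\omega,1)$ differ by $(1-t)$ times a form determined by the torsion of $\nabla^1$, whose trace part is governed by the Lee form $\theta$ (with $\theta = 0 \iff \delta\omega = 0$). Hence $p(\Omega(s),t) = p(\Omega(s),1)$ for all $t$ precisely when $\Omega(s)$ is balanced, which reduces (b) and the clause ``$\forall t$'' to establishing (c).

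To prove (c) I would compute $d\,\Omega(s)^{N-1}$ with $N = \dim_{\mathbb{C}}\mathrm{U}(E)$, using the structure equations $d\eta_j = -\pi^*\omega_{F_j} + (\text{pairing terms})$, and check that the balancing constraint on $\sum_i \omega_{F_i}$ imposed in the first step makes the torsion trace vanish; since $\Omega_{\mathrm{fib}}$ is fixed and the base is merely rescaled by the positive constant $\tfrac{\lambda-s}{\lambda}$, the vanishing of $\theta$ persists for every $s \in (-\infty,\lambda)$. The main obstacle I anticipate is exactly this last step: arranging the splitting so that the total-space metric is balanced \emph{and} showing this survives the $s$-dependent scaling of the base. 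Unlike Theorem \ref{TeoremaA}, where the distinguished summand $\mathcal{O}_X(k)$ produces a nonzero Lee form and hence the genuinely $t$-dependent Gauduchon Ricci-flat condition, here one must choose the Chern classes of the $F_i$ so that their curvature contributions to the torsion trace cancel identically — a direct but delicate computation with the structure equations of the torus bundle, which is where the hypothesis $\varrho(X)>1$ on $\mathrm{Pic}(X)$ is really used.
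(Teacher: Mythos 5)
Your overall strategy is the same as the paper's: realize $\mathrm{U}(E)$ as a principal $T^{2n}$-bundle over $X$ with an invariant connection, let the base evolve by the scaling solution $\omega_s=(\lambda-s)p_{\omega_0}$ of the K\"ahler--Ricci flow, keep the fiber term fixed, and reduce both the ``$\forall t$'' clause and item (c) to a torsion-trace/Lee-form vanishing condition on the curvature forms $\psi_i$ of the $F_i$. However, you leave the decisive step --- which line bundles to take and why the resulting metric is balanced --- as an ``anticipated obstacle,'' and the condition you propose to impose is not the right one. You ask that a \emph{weighted sum} of the Chern forms contribute no trace term, i.e.\ that a cancellation occur in $\sum_i\Lambda_{\omega_0}(\psi_i)\psi_i$. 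But the codifferential of the total-space metric is
\begin{equation*}
\delta\Omega_s \;=\; \pi^*(\delta\omega_s)\;+\;\sum_{j=1}^{2n}\pi^*\bigl(\Lambda_{\omega_s}(\psi_j)\bigr)\,\Theta_j ,
\end{equation*}
and the connection components $\Theta_1,\dots,\Theta_{2n}$ are pointwise linearly independent $1$-forms on $\mathrm{U}(E)$, so no cancellation among the vertical terms is possible: balancedness forces $\Lambda_{\omega_0}(\psi_j)=0$ for \emph{each} $j$ separately. The paper secures exactly this by taking every summand $F_i$ in $\mathrm{Pic}^0_{\omega_0}(X)$ (degree zero with respect to $\omega_0$) and invoking the construction of \cite{correa2023t}, Section 5.1, which uses $\varrho(X)>1$ to produce a connection whose curvature forms $\psi_i$ are $G$-invariant \emph{primitive} $(1,1)$-forms. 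Note the contrast with Theorem \ref{TeoremaA}, where one summand is $\mathcal{O}_X(k)$ and its $\psi_1$ is proportional to $\omega_0$, hence not primitive.

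Once this individual primitivity is in place, the rest of your outline goes through and matches the paper: $\sum_j\Lambda(\psi_j)\psi_j=0$ makes the formula for $p(\Omega_s,t)$ collapse to $\pi^*p_{\omega_0}$ for every $t\in\mathbb{R}$ (so the flow equation holds for all canonical connections simultaneously), the vanishing persists along the flow because $\Lambda_{c\omega_0}=c^{-1}\Lambda_{\omega_0}$ for $c=\lambda-s>0$, and $\delta\Omega_s=(\lambda-s)\pi^*(\delta\,p_{\omega_0})=0$ since the base metric is K\"ahler. So the gap is concentrated in a single point, but it is the point on which the theorem actually turns: you must exhibit the $F_i$ and the connection with each $\psi_i$ primitive, rather than postulate a cancellation that the linear independence of the $\Theta_j$ rules out as a substitute.
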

Consider the pluriclosed flow
\begin{equation}
\label{pluriclosedflow}
       \begin{cases}
       \displaystyle \frac{\partial }{\partial s} \omega(s)= -\rho_{B}^{1,1}(\omega(s))\\
  \omega(0)=\omega_0   .
   \end{cases} 
\end{equation}
where $\rho_{B}^{1,1}(\omega(s))$ is the $(1,1)$-part of the Bismut Ricci form of $\omega(s)$. In \cite{streets2010parabolic}, Streets and Tian showed that this flow preserves the pluriclosed condition. That means the solution is pluriclosed at any time if the initial data are pluriclosed. More recently, in \cite{streets2016pluriclosed}, Streets proved long time existence and convergence for the pluriclosed flow with arbitrary initial data on certain complex manifolds. \\

In the above setting, as particular consequence of item (b) of the last theorem, we have the following result.
\begin{Coro}
\label{Balanced}
In the setting of Theorem \ref{TeoremaB}, the family of Hermitian metrics $\Omega(s), s \in (-\infty,\lambda)$, on $({\rm{U}}(E), J)$, defines a solution to the pluriclosed flow
\begin{equation}
\displaystyle \frac{\partial }{\partial s} \Omega(s)= -\rho_{B}^{1,1}(\Omega(s)), \ \  \Omega(0)=\Omega_0,
\end{equation}
such that $({\rm{U}}(E),\Omega(s), J)$ is a balanced Hermitian manifold for all $s \in (-\infty , \lambda)$.
\end{Coro}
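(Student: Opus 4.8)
The plan is to obtain the corollary as a direct specialization of Theorem~\ref{TeoremaB}(b), reading the pluriclosed flow as the member $t=-1$ of the one-parameter family of geometric flows already solved there. First I would recall, from Gauduchon's classification of the canonical Hermitian connections, that under the normalization in which $\nabla^{1}$ is the Chern connection the Bismut (Strominger) connection is precisely $\nabla^{-1}$. Consequently the first Ricci form of the Bismut connection is $p(\,\cdot\,,-1)=\sqrt{-1}\,\mathrm{tr}\,R(\nabla^{-1})$, and its $(1,1)$-component is exactly the form driving the pluriclosed flow, i.e.\ $\rho_{B}^{1,1}(\Omega)=\bigl[p(\Omega,-1)\bigr]^{1,1}$ for every Hermitian metric $\Omega$ on $({\rm{U}}(E),J)$.

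Next I would invoke Theorem~\ref{TeoremaB}(b), which asserts that the single family $\Omega(s)$, $s\in(-\infty,\lambda)$, solves $\tfrac{\partial}{\partial s}\Omega(s)=-p(\Omega(s),t)$ for every $t\in\mathbb{R}$ simultaneously; in particular the right-hand side is independent of $t$, so the Chern and Bismut Ricci forms coincide along this family. Specializing to $t=-1$ gives $\tfrac{\partial}{\partial s}\Omega(s)=-p(\Omega(s),-1)$. Since the left-hand side is the $s$-derivative of a family of $(1,1)$-forms, it is itself a real $(1,1)$-form, so projecting both sides onto their $(1,1)$-parts leaves the left-hand side unchanged and yields
\[
\frac{\partial}{\partial s}\Omega(s)=-\bigl[p(\Omega(s),-1)\bigr]^{1,1}=-\rho_{B}^{1,1}(\Omega(s)),
\]
with $\Omega(0)=\Omega_{0}$. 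Hence $\Omega(s)$ solves the pluriclosed flow~(\ref{pluriclosedflow}). The balanced assertion needs no further work: it is exactly the content of Theorem~\ref{TeoremaB}(c), giving $\delta\Omega(s)=0$ for all $s\in(-\infty,\lambda)$.

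I expect the only genuine obstacle to lie in the first step, namely fixing the sign and normalization conventions so that the Bismut connection really is $\nabla^{-1}$ in the Gauduchon family, and verifying that $\mathrm{tr}\,R(\nabla^{-1})$ (traced over the holomorphic tangent bundle) agrees, after $(1,1)$-projection, with the Bismut Ricci form $\rho_{B}^{1,1}$ appearing in~(\ref{pluriclosedflow}) in the sense of Streets and Tian. Once this identification is in place, no new analytic input---short-time existence, preservation of type, or estimates---is required, since both the solution and its balanced property are inherited verbatim from Theorem~\ref{TeoremaB}.
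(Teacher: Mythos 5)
Your proposal is correct and follows essentially the same route as the paper: specialize Theorem \ref{TeoremaB}(b) to $t=-1$, observe that the right-hand side $p(\Omega(s),-1)$ is already of $(1,1)$-type (the paper notes this via $p(\Omega_s,t)=\pi^*p_{\omega_0}$, you via the $(1,1)$-type of the left-hand side, which is equivalent here), so it equals $\rho_B^{1,1}(\Omega(s))$, and quote item (c) for the balanced property. The convention $\nabla^{-1}=$ Bismut and the definition $\rho_B^{1,1}(\omega):=\rho_1(\omega,-1)$ are fixed in the paper's Section 2, so the normalization issue you flag is already settled.
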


An interesting feature of the above corollary is that, since $({\rm{U}}(E),\Omega(s),J)$ is non-K\"{a}hler (see for instance \cite{hofer1993remarks}, \cite{poddar2018group}) and balanced, the Hermitian metrics $\Omega(s), s \in (-\infty,\lambda)$, are not pluriclosed, see for instance \cite{popovici2013aeppli}. In \cite{streets2010parabolic}, \cite{tian2011hermitian} and \cite{streets2011regularity}, it is shown that Eq. (\ref{pluriclosedflow}) is a strictly parabolic system under pluriclosed assumption. From this, one gets the short-time existence and some regularity results to solutions. More precisely, in the case that the initial condition of the pluriclosed flow (\ref{pluriclosedflow}) is a pluriclosed metric, it was shown in \cite{streets2010parabolic} that the pluriclosed flow is equivalent to Hermitian curvature flow (HCF)
\begin{equation}
       \begin{cases}
       \displaystyle \frac{\partial }{\partial s} g(s)= -S + Q^{1}\\
  g(0)=g   
   \end{cases} 
\end{equation}
where $S$ is the second Chern–Ricci curvature and $Q^{1}$ is a certain quadratic polynomial in the torsion of the underlying Chern connection. This last fact allows one to obtain the local existence theorem for the pluriclosed flow from the general regularity theorem given in \cite{tian2011hermitian}. In view of these facts, as far as the authors are aware about, none example of balanced non-pluriclosed solution to the pluriclosed flow on non-Kähler manifolds is known in the literature. In this sense, the result presented in Corollary \ref{Balanced} is the first example in the literature of balanced non-pluriclosed solution to the pluriclosed flow on a non-Kähler manifold. \\

Another important question of interest in this work is the convergence of the Chern-Ricci flow. It was shown in \cite{gill2011convergence} that if $(M,J,\omega_0 )$ is a Hermitian manifold with a vanishing Bott-Chern class, then the Chern-Ricci flow admits a long-time solution that converges to a Chern-Ricci flat metric on $(M,J)$. Similarly \cite{tosatti2015evolution} proved that if the first Chern class of $M$ is negative, the flow also admits a long-time solution which, when normalized, converges to a Kähler-Einstein metric. Often, the solution to the flow does not converge to a metric on the same underlying manifold. In such cases, we instead consider convergence in the Gromov-Hausdorff sense. Notably, significant results on this type of convergence for complex surfaces have been established by \cite{tosatti2013chern}, including the cases of Hopf and Inoue surfaces.\\

In the setting of Theorem \ref{TeoremaA} and Theorem \ref{TeoremaB}, considering the underlying principal torus bundle
\begin{equation}
T^{2n} \hookrightarrow {\rm{U}}(E) \to X,
\end{equation}
we apply the results of \cite{correa2024bundle} to compute the collapse of geometric flows of Hermitian structures in the Gromov-Hausdorff sense. In this case, we obtain the following result.
 
\begin{Coro}
\label{GHHermitian}
If the holonomy group ${\rm{Hol}}_{u} (\Theta)$ is closed in $T^{2n}$, then
\begin{equation*}
    \lim_{s \rightarrow \lambda} d_{GH} \big(({\rm{U}}(E) , d_{\Omega_s}), (T^{2n-l}, d_g )\big) =0,
\end{equation*}
where $g$ be the normal metric on the torus $T^{2n-l}$ and $l = \dim{\rm{Hol}}_{u} (\Theta)$.
\end{Coro}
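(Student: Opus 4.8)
The plan is to exhibit the family $\Omega(s)$ as Riemannian submersion metrics over a collapsing base and then invoke the collapsing machinery of \cite{correa2024bundle}. First I would record the structure of the solutions produced in Theorems \ref{TeoremaA} and \ref{TeoremaB}: since every direct summand of $E$ is a line bundle, the unitary frame bundle is exactly the principal $T^{2n}$-bundle $T^{2n} \hookrightarrow {\rm{U}}(E) \to X$, and $\Omega(s)$ is a Hermitian submersion metric whose horizontal part is $\pi^{*}\omega(s)$ and whose vertical part is governed by a fixed principal connection $\Theta$ together with a flat metric on the fibers. Because $\omega_{0} = \lambda p_{\omega_{0}}$ is Kähler-Einstein, the flow induced on the base is the Kähler-Ricci flow with $\omega(s) = (1 - s/\lambda)\,\omega_{0}$, so that the horizontal metric scales to zero and ${\rm{diam}}(X, \omega(s)) \to 0$ as $s \to \lambda$; all horizontal directions collapse in the limit.

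The second step is to analyze the effect on the torus fibers. The key point is that a loop $\gamma$ in $X$ lifts horizontally to a path whose endpoints differ by the holonomy element $P_{\gamma} \in {\rm{Hol}}_{u}(\Theta) \subset T^{2n}$, while the $\Omega(s)$-length of this horizontal lift equals the $\omega(s)$-length of $\gamma$, hence tends to $0$. Consequently, for every $h \in {\rm{Hol}}_{u}(\Theta)$ the $d_{\Omega_{s}}$-distance between a fiber point $p$ and its translate $p \cdot h$ tends to $0$, so the holonomy subtorus is collapsed together with the base, whereas the directions transverse to it retain their length. Since ${\rm{Hol}}_{u}(\Theta)$ is closed in $T^{2n}$, it is a compact subgroup of dimension $l$; the translation action on the fibers is free with quotient $T^{2n}/{\rm{Hol}}_{u}(\Theta) \cong T^{2n-l}$, and the vertical metric restricted to the orthogonal complement of the holonomy directions descends to the normal metric $g$ on $T^{2n-l}$.

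Finally, I would verify that $\{\Omega(s)\}_{s}$ meets the hypotheses of the collapsing theorem of \cite{correa2024bundle} — a family of submersion metrics over a collapsing base with fixed connection and controlled vertical metric — and apply it to obtain
\begin{equation*}
\lim_{s \to \lambda} d_{GH}\big(({\rm{U}}(E), d_{\Omega_{s}}), (T^{2n-l}, d_{g})\big) = 0.
\end{equation*}

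The main obstacle will be upgrading the heuristic collapse into genuine Gromov-Hausdorff convergence. One must construct, for each $s$ close to $\lambda$, an explicit $\varepsilon(s)$-approximation with $\varepsilon(s) \to 0$, which demands simultaneous control of the shrinking base diameter and of the lengths of the horizontal lifts realizing all holonomy elements — uniformly over a generating set of loops, using the compactness of $X$ and of ${\rm{Hol}}_{u}(\Theta)$ — together with a uniform lower bound guaranteeing that the transverse fiber directions do not also degenerate. Here the closedness of ${\rm{Hol}}_{u}(\Theta)$ is essential: if the holonomy were merely dense in a larger subtorus, the limit would be $T^{2n}/\overline{{\rm{Hol}}_{u}(\Theta)}$ of strictly smaller dimension and the identification with the normal metric on $T^{2n-l}$ would break down. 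These uniform estimates are precisely what is packaged in \cite{correa2024bundle}, so the remaining task is to check its hypotheses against the concrete metrics $\Omega(s)$ constructed above.
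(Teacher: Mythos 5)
Your proposal is correct and follows essentially the same route as the paper: one checks that $\Omega_s = (\lambda-s)\pi^{*}p_{\omega_0} + \tfrac{1}{2}{\rm{tr}}(\Theta\wedge J\Theta)$ is exactly a metric of the form $f(s)\pi^{*}g_M + \langle\Theta,\Theta\rangle$ with $f(s)=\lambda-s\to 0$, notes that a closed connected subgroup of $T^{2n}$ is a subtorus $T^{l}$ so that $T^{2n}/{\rm{Hol}}_u(\Theta)\cong T^{2n-l}$ with the normal metric, and applies the collapsing theorem of \cite{correa2024bundle}. The additional heuristics you give about horizontal lifts of loops realizing holonomy elements are the content of that cited theorem rather than a separate argument, so nothing further is needed.
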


The result above provides an explicit description for the limiting behavior of the geometric flows considered in Theorem \ref{TeoremaA} and Theorem \ref{TeoremaB}. It is worth mentioning that some results related to the convergence of the Chern-Ricci flow and the pluriclosed flow also were obtained in \cite{streets2016pluriclosed}, \cite{barbaro2022global}, \cite{tosatti2015evolution}, \cite{correa2023levi}, \cite{fusi2024pluriclosed}, \cite{liang2024continuity}.\\ 

As an application of the above results, we describe explicitly the Chern-Ricci flow and pluriclosed flow on certain principal $T^{2}$-bundles over the Fano threefold ${\mathbb{P}}(T_{{\mathbb{P}^{2}}})$. In the particular case of the pluriclosed flow, we compute explicitly its Gromov-Hausdorff limit. For more on the dynamics of geometric flows on the Fano threefold ${\mathbb{P}}(T_{{\mathbb{P}^{2}}})$, see \cite{cavenaghi2023complete} and references therein.

\subsection*{Acknowledgments.}E. M. Correa is supported by S\~{a}o Paulo Research Foundation FAPESP grant 2022/10429-3. L. Grama research is partially supported by S\~ao Paulo Research Foundation FAPESP grants 2018/13481-0 and 2023/13131-8 and G. Galindo is supported by the Coordination for the Improvement of Higher Education Personnel CAPES.

\section{Canonical connections}

Let us start by recalling some basic facts about the Ricci forms of the canonical connections on almost Hermitian manifolds. A Riemannian manifold $(M,g)$ is called an almost Hermitian manifold if it is equipped with an almost-complex structure $J$, that is, an orthogonal map $J: TM \rightarrow TM$ that satisfies $J^2 = -Id$. On such manifolds we define the fundamental 2-form as follows
\begin{equation}
    \omega(X,Y) = g(JX,Y).
\end{equation}
From above we refer to $\omega$ as the metric of the almost Hermitian manifold.\\

The almost-complex structure $J: TM \rightarrow TM$ is called integrable (or a complex structure) if the Nijenhuis tensor 
\begin{equation}
    N(X,Y) = [JX,JY]-J[JX,Y]-J[X,JY]-[X,Y]
\end{equation}
vanishes. In that case, we call $(M,g,J)$ a Hermitian manifold. Now we recall some basic concepts which we shall consider throughout this work. 

\begin{definition}
A Hermitian manifold $(M,g,J)$ of complex dimension $n$ is said to be balanced if $\omega$ is co-closed (i.e., $\delta \omega = 0$).
\end{definition}
\begin{remark}
Notice that, since $\ast \omega^{k} = \frac{k!}{(n-k)!}\omega^{n-k}$, it follows that $(M,\omega,J)$ is balanced if, and only if, $d\omega^{n-1} = 0$.
\end{remark}

\begin{definition}
Given a Hermitian manifold $(M,\omega,J)$, we say that the Hermitian metric $\omega$ is pluriclosed if 
\begin{equation}
\partial \overline{\partial}\omega = 0.
\end{equation}
\end{definition}

On almost Hermitian manifolds we can define a family of connections $\nabla^{t}$, $t \in \mathbb{R}$, called the canonical connections \cite{gauduchon1997Hermitian}. For Hermitian manifolds they can be described in terms of the Levi-Civita connection $\nabla^{LC}$ by
\begin{equation}\label{Conections}
    g(\nabla^{t}_X Y, Z) =   g(\nabla^{LC}_X Y, Z) +\frac{t-1}{4} d^c \omega (X,Y,Z) + \frac{t+1}{4} d^c \omega (X,JY,JZ),
\end{equation}
where $d^c \omega (X,Y,Z) = - d\omega(JX,JY,JZ)$. In the above family we have the following distinguished connections:
\begin{enumerate}
    \item [(1)] If $t=1$, $\nabla^{1}$ is the Chern connection; it is characterized by the torsion having no $(1,1)$-component,
    \item[(2)] If $t=0$, $\nabla^{0}$ is the Lichnerowicz connection; it is characterized (among the canonical connections) by the torsion having no $(2,0)$-component,
    \item[(3)] If $t=-1$, $\nabla^{-1}$ is the Strominger-Bismut connection; it is the unique connection such that $T-N$ is skew-symmetric, where $T$ is the torsion.
\end{enumerate}
\begin{remark}

The family of canonical connections $\nabla^t$ can still be defined for almost Hermitian manifolds. However, Equation (\ref{Conections}) changes, for more details, we suggest  \cite{gauduchon1997Hermitian} and \cite{vezzoni2013note}.
\end{remark}
Let $R(\nabla^t )$ be the curvature tensor of $\nabla^t$ and let $p(\omega,t) = \sqrt{-1}{\rm{tr}}(R(\nabla^t ))$ be the associated Ricci form. Consider the $(1,1)$ part of $p(\omega,t)$ given by
\begin{equation}
\label{(1,1)partGauduchon}
    p_1 (\omega,t) = \frac{1}{2}(p(\omega,t) +Jp(\omega,t)),
\end{equation}
where $Jp(\omega,t)$ means
\begin{equation*}
    Jp(\omega,t) (X,Y) = p(\omega,t) (JX,JY).
\end{equation*}
In the above setting, we have the following definition.

\begin{definition}
We say that a Hermitian manifold $(M,\omega,J)$ is $t$-Gauduchon Ricci-flat, for some $t \in \mathbb{R}$, if $p_1 (\omega,t)=0$.
\end{definition}

For the case $t=1$ we have that $p(\omega,1)$, called the Chern-Ricci form, is closed and locally exact, that is, $p(\omega,1)=d\theta$, where $\theta$ is the $1$-form given by
\begin{equation*}
    \theta(X)=-\frac{1}{2}\sum_i g([X,e_i ] ,Je_i ) -g([X,Je_i ], e_i ) +g([JX, e_i ] , e_i ) + g([JX,Je_i ] , e_i ),
\end{equation*}
where $\{e_1 ,...,e_n , Je_1 ,...,Je_n \}$ is a local orthonormal frame for $M$, see for instance \cite{vezzoni2013note}. Moreover, since 
\begin{enumerate}
\item $\rho(\omega,1) = \sqrt{-1}{\rm{tr}}(R(\nabla^{Ch})) = \rho_{1}(\omega,1),$
\item ${\rm{d}} \delta \Omega + J({\rm{d}} \delta \Omega) = 2 ( \partial \partial^{\ast} \Omega + \bar{\partial}\bar{\partial}^{\ast}\Omega),$
\end{enumerate}
we conclude that 
\begin{equation}
\rho_{1}(\omega,t) =  \rho_{1}(\omega,1)+ \frac{t-1}{2}\big ( \partial \partial^{\ast} \omega + \bar{\partial}\bar{\partial}^{\ast}\omega\big).
\end{equation}
In particular, if $t = -1$, we obtain the $(1,1)$-part of the Bismut–Ricci form $\rho_{B}(\omega) = \rho(\omega,-1)$, i.e., we have 
\begin{equation}
\rho_{1}(\omega,-1) =  \rho_{1}(\omega,1)-\big ( \partial \partial^{\ast} \omega + \bar{\partial}\bar{\partial}^{\ast}\omega\big).
\end{equation}
For the sake of simplicity, we shall denote $\rho_{B}^{1,1}(\omega)  := \rho_{1}(\omega,-1)$.

\section{Geometric Flows on Hermitian manifolds}

In what follows, we briefly present the geometric flows of Hermitian metrics which we are concerned in this work, for more details on the subject, we suggest \cite{tian2011hermitian}, \cite{tosatti2015evolution}, and \cite{streets2010parabolic}.\\

\par A Hermitian manifold $(M,g,J)$ is called Kähler if $d\omega=0$. In this case, all the canonical connections coincide with the Levi-Civita one, and we denote its Ricci form only by $p_{\omega}$. On Kähler manifolds, a solution to the Kähler-Ricci flow starting at $\omega_0 = g_0 (J\cdot,\cdot)$ is a family of K\"{a}hler metrics $\omega_s = \omega(s)$, which satisfies
\begin{align}
   \begin{cases}
       \displaystyle \frac{\partial}{\partial s}\omega(s) = -p_{\omega_s}\\
  \omega(0)=\omega_0   .
   \end{cases} 
\end{align}
Since the Ricci form is $J$-invariant, this flow coincides with the Ricci flow of the underlying Riemannian metrics.
\begin{definition}
A Hermitian metric $\omega$ on a complex manifold $(M,J)$ is said to be Kähler-Einstein metric if it is Kähler and satisfies the equation
    \begin{equation*}
        p_\omega =\lambda \omega
    \end{equation*}
for some $\lambda \in \mathbb{R}$, that is, the Ricci form is a scalar multiple of the metric. 
\end{definition}

\begin{remark}
Given a Kähler-Einstein manifold $(M,\omega,J)$, with positive scalar curvature, we shall assume that $\omega = \lambda p_\omega$, inverting the side of the constant.
\end{remark}

If $(M,\omega_{0},J)$ is a Kähler-Einstein manifold with positive scalar curvature, such that $\omega_0 =  \lambda p_{\omega_0}$, then a solution to the Kähler-Ricci flow starting at $\omega_0$ is  
\begin{equation}
    \omega_s =(\lambda-s)p_{\omega_0} = (\lambda-s) p_{\omega_s}, 
\end{equation}
such that $s \in (-\infty , \lambda)$. \\

For non-Kähler manifolds, the Ricci flow does not necessarily  preserve the Hermitian condition $g(J\cdot,J\cdot) = g$. Thus, instead of evolving the metric by the Ricci form, one can evolve the metric by its Chern-Ricci form, i.e., by means of the Chern-Ricci flow, see \cite{tosatti2022chern}, \cite{zheng2018almost}. In this setting, given a Hermitian manifold, $(M,\omega_{0},J)$, a solution to the Chern-Ricci flow starting at a metric $\omega_0$ is a one-parameter family of metrics $\omega_s = \omega(s)$ satisfying the PDE
\begin{equation}
       \begin{cases}
       \displaystyle \frac{\partial }{\partial s} \omega(s)= -p_1 (\omega(s),1)\\
  \omega(0)=\omega_0   .
   \end{cases} 
\end{equation}

\begin{remark}
In the above setting, if the starting metric $\omega_{0}$ is Kähler, then the Chern-Ricci flow coincides with the Kähler-Ricci flow.
\end{remark}

Another important flow of Hermitian metrics, introduced in \cite{streets2010parabolic} by Streets and Tian, is the initial value problem
\begin{equation}
       \begin{cases}
       \displaystyle \frac{\partial }{\partial s} \omega(s)= -\rho_{B}^{1,1}(\omega(s))\\
  \omega(0)=\omega_0   .
   \end{cases} 
\end{equation}
Since pluriclosed condition are preserved along the above flow, it is called pluriclosed flow. As mentioned in \cite{streets2013regularity}, equation above falls into a general class of flows of Hermitian metrics, and as shown by Streets and Tian in \cite{tian2011hermitian}, solutions to the above initial value problem exist as long as the Chern curvature, torsion and covariant derivative of torsion are bounded.

\section{Generalities on Flag manifolds}
\par In this section, we give a brief account on flag manifolds, for more details see \cite{arvanitoyeorgos2006geometry} or \cite{alekseevsky1997flag}.
\begin{definition}
 A complex flag manifold is a complex homogeneous space $G^{\mathbb{C}}/ P$, where $G^{\mathbb{C}}$ is a connected, simply connected and semisimple complex Lie group and $P \subset G^{\mathbb{C}}$ is a parabolic subgroup, i.e., a Lie subgroup which contains a Borel subgroup.
\end{definition}

\begin{remark}
Fixed a compact real form $G \subset G^{\mathbb{C}}$, in this work, unless otherwise stated, we shall assume that every Hermitian metric on a complex flag manifold is $G$-invariant. 
\end{remark}

In terms of Lie algebras, let $\mathfrak{g}^{\mathbb{C}} = \mathfrak{t}^\mathbb{C} \oplus \sum_{\alpha \in R} \mathfrak{g}_\alpha $ be the root space decomposition of $\mathfrak{g}^{\mathbb{C}}$, where $\mathfrak{t}^\mathbb{C}$ is a Cartan subalgebra and $R$ is the associated set of roots. Fixed a system of simple roots $\Pi \subset R$, let $R^+$ be the underlying set of positive roots. From this data, we can construct a Borel subalgebra $\mathfrak{b}$ by setting
\begin{equation}
\label{canonicalborel}
    \mathfrak{b} = \mathfrak{t}^\mathbb{C} \oplus \sum_{\alpha \in R^+}\mathfrak{g}_\alpha.
\end{equation}
Consider now the followng result.
\begin{theorem}
    Any two Borel subgroups are conjugated to each other.
\end{theorem}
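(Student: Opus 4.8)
The plan is to prove conjugacy via the Borel fixed point theorem applied to the action on the full flag variety. First I would fix the reference Borel subgroup $B \subset G^{\mathbb{C}}$ corresponding to the Borel subalgebra $\mathfrak{b}$ of Equation~(\ref{canonicalborel}), and record the two structural facts I will lean on: (i) a Borel subgroup is precisely a maximal connected solvable subgroup of $G^{\mathbb{C}}$; and (ii) the homogeneous space $G^{\mathbb{C}}/B$ is a complete (indeed projective) variety, namely the full flag manifold. Both are standard and may be cited from the references on flag manifolds already introduced in this section.

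Next, given an arbitrary Borel subgroup $B'$, I would consider its left-translation action on $X := G^{\mathbb{C}}/B$. Since $B'$ is connected and solvable and $X$ is complete, the Borel fixed point theorem guarantees a fixed point $gB \in X$. Unwinding the fixed-point condition $B' \cdot gB = gB$ yields the containment $g^{-1} B' g \subseteq B$, equivalently $B' \subseteq gBg^{-1}$.

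To upgrade this containment to an equality I would invoke maximality. The subgroup $gBg^{-1}$ is again connected and solvable, since conjugation is an automorphism of $G^{\mathbb{C}}$, and $B'$ is maximal among connected solvable subgroups; from $B' \subseteq gBg^{-1}$ with both sides maximal connected solvable, one concludes $B' = gBg^{-1}$. This exhibits $B'$ as a conjugate of the fixed reference $B$, and since $B$ and $B'$ were arbitrary, any two Borel subgroups are conjugate to one another.

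The substantive input — and hence the main obstacle — is the Borel fixed point theorem together with the completeness of $G^{\mathbb{C}}/B$; everything else in the argument is formal. An alternative route, closer to the Lie-algebraic setup just introduced, would instead pass to Lie algebras: each Borel subalgebra contains a Cartan subalgebra, and by the conjugacy of Cartan subalgebras under inner automorphisms one may arrange the two Borels to share the Cartan $\mathfrak{t}^{\mathbb{C}}$; the two then differ only by a choice of positive system, and the simple transitivity of the Weyl group action on positive systems, realized by inner automorphisms through the normalizer of the maximal torus, produces the conjugating element. The delicate point in this second approach is verifying that the relevant Weyl group elements are genuinely realized by inner automorphisms of $G^{\mathbb{C}}$, so I would favor the Borel fixed point argument as the cleaner path.
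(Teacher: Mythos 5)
Your argument is correct and is the classical proof of Borel's conjugacy theorem: let $B'$ act on the complete variety $G^{\mathbb{C}}/B$, extract a fixed point $gB$ via the Borel fixed point theorem, and upgrade the resulting inclusion $B' \subseteq gBg^{-1}$ to equality by maximality among connected solvable subgroups. The paper itself offers no proof of this statement; it is quoted as a standard fact from the references on flag manifolds, so there is no competing argument to compare against. The only point worth flagging is that the completeness of $G^{\mathbb{C}}/B$ (and, in the general algebraic-groups treatment, the existence of a Borel subgroup with complete quotient) is itself nontrivial and logically prior to the fixed-point step, but in the setting of the paper --- connected, simply connected, semisimple complex Lie groups, where $G^{\mathbb{C}}/B$ is the compact flag manifold $G/T$ --- this is standard, and you correctly identify it as the substantive input.
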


From the above result, we can always assume that the Lie algebra of a Borel subgroup has the form given in Equation (\ref{canonicalborel}).\\

Considering $B = \exp(\mathfrak{b})$, every parabolic subgroup $P$ (up to conjugation) can be described in terms of parabolic Lie subalgebra as follows: Given a subset $\Pi_P \subset \Pi$ define 
\begin{center}
$R_P = R \cap {\rm{Span}} [ \Pi_P ],$ 
\end{center}
and set
 \begin{equation}
     \mathfrak{p} = \mathfrak{t}^\mathbb{C} \oplus \sum_{\alpha \in R^+}\mathfrak{g}_\alpha \oplus \sum_{\alpha \in R_P / R^+} \mathfrak{g}_{\alpha} =  \mathfrak{b} \oplus \sum_{\alpha \in R_P / R^+} \mathfrak{g}_{\alpha}.
 \end{equation}
From above we obtain a parabolic Lie subalgebra $\mathfrak{p} \subset \mathfrak{g}^{\mathbb{C}}$ that defines a parabolic subgroup $P \subset G^\mathbb{C}$. A consequence of the above construction is that every flag manifold is completely determined by the choice of a subset $\Pi_P \subset \Pi$.



\begin{remark}
As a quotient of complex Lie group, every flag manifold $X= G^\mathbb{C}/P$ admits complex structures, and each complex structure gives rise to a homogeneous Kähler-Einstein metric with positive scalar curvature \cite{matsushima1972remarks}.
\end{remark}

\par Given a complex manifold $X$, we denote by ${\rm{Pic}}(X)$ the Picard group of $X$, i.e., the group of isomorphism classes of holomorphic line bundles over $X$ with the group operation given by the tensor product. \\

When $X=G^\mathbb{C}/P$ is a complex flag manifold, there is a simple description for ${\rm{Pic}}(X)$ in terms of the fundamental weights of the Lie algebra $\mathfrak{g}^{\mathbb{C}}$. For every root $\alpha \in R$, consider
\begin{equation*}
    \alpha^{\vee} :=\frac{2}{\langle \alpha , \alpha \rangle}\alpha ,
\end{equation*}
where $\langle -,-\rangle$  is the associated Killing form. From above, the fundamental weights $\{\varpi_\alpha \ | \ \alpha \in \Pi \}  \subset \mathfrak{t^{\mathbb{C}}}^*$ are defined by requiring that $\langle \varpi_\alpha , \beta^{\vee} \rangle =0$   if $\alpha \neq \beta$ and  $\langle \varpi_\alpha , \alpha^{\vee} \rangle =1$, for every $\alpha, \beta \in \Pi$. The set of integral dominant weights of $\mathfrak{g}^{\mathbb{C}}$ is defined by
\begin{equation}
    \Lambda^{+} := \bigoplus_{\alpha \in \Pi} \mathbb{Z}_{\geq 0}\varpi_\alpha.
\end{equation}
Given a complex flag manifold $X=G^\mathbb{C}/P$, considering $\Pi_P \subset \Pi$ determined by $P$, we call
\begin{equation}
       \Lambda_{P} := \bigoplus_{\alpha \in \Pi/\Pi_P} \mathbb{Z}\varpi_\alpha
\end{equation}
the weights of $X$. In this last case, we have an isomorphism
\begin{equation*}
\label{isopic}
    {\rm{Pic}}(X) \cong \Lambda_{P}, \ \ E \mapsto \lambda(E),
\end{equation*}
For details on how this isomorphism works see \cite{snow1989homogeneous} or \cite{correa2023t}.
\begin{remark}
In the above setting, we shall denote by $\mathscr{O}_{\alpha}(1) \in {\rm{Pic}}(X)$ the generators satisfying $\lambda(\mathscr{O}_{\alpha}(1)) = \varpi_{\alpha}$, $\forall \alpha \in \Pi/\Pi_P$. Also, we will denote 
\begin{center}
$\mathscr{O}_{\alpha}(s) := \underbrace{\mathscr{O}_{\alpha}(1) \otimes \cdots \otimes  \mathscr{O}_{\alpha}(1)}_{s-{\text{times}}}$, 
\end{center}
for every $s \in \mathbb{Z}$ and every $\alpha \in \Pi/\Pi_P$.
\end{remark}
\begin{remark}
We note that the Fano index of a complex flag manifold $X$, defined by the choice of some $\Pi_P \subset \Pi$, is the greatest common divisor 
\begin{equation*}
    \rm{I}(X) = GCD \{ \langle  \delta_{P}, \alpha^{\vee}\rangle | \alpha \in \Pi/\Pi_P \},
\end{equation*}
where $\delta_P = \sum_{\alpha \in R^+ /R_{P}}\alpha$. For more details, see \cite{correa2019homogeneous}.\\
\end{remark}
Given a Kähler metric $\omega$ on $X$ and $E \in {\rm{Pic}}(X)$, we define the degree of $E$, relative to $\omega$, as\footnote{Sometimes it is convenient to denote $\deg_{\omega}(E)$ to emphasize the underlying K\"{a}hler metric.}
\begin{equation}
    \deg(E) := \int_X c_1 (E) \wedge \omega^{n-1},
\end{equation}
where $c_1 (E)$ is the first Chern class of $E$. In this work, we shall consider the subgroup of ${\rm{Pic}}(X)$ defined by
\begin{equation}
{\rm{Pic}}^{0}_{\omega}(X) = \big \{ E \in {\rm{Pic}}(X)  \ \big | \ \deg(E) = 0\},
\end{equation}
for some fixed K\"{a}hler metric $\omega$ on $X$. A complete description of ${\rm{Pic}}^{0}_{\omega}(X)$ can be found in \cite{correa2023t}.

In what follows, we present a detailed example which summarizes the above ideas. This example will be considered to illustrate our main results later on.

\begin{example}
\label{exampleprojtangent}
Consider the complex Lie group $G^{\mathbb{C}} = {\rm{SL}}_{3}(\mathbb{C})$. In this case, the structure of the associated Lie algebra $\mathfrak{sl}_{3}(\mathbb{C})$ can be completely determined by means of its Dynkin diagram
\begin{center}
${\dynkin[labels={\alpha_{1},\alpha_{2}},scale=3]A{oo}} $
\end{center}
Choosing the Cartan subalgebra $\mathfrak{h} \subset \mathfrak{sl}_{3}(\mathbb{C})$ of diagonal matrices, we have the associated simple root system given by $\Pi  = \{\alpha_{1},\alpha_{2}\}$, such that 
\begin{center}
$\alpha_{j}({\rm{diag}}(d_{1},d_{2},d_{3})) = d_{j} - d_{j+1}$, $j = 1,2$,
\end{center}
$\forall {\rm{diag}}(d_{1},d_{2},d_{3}) \in \mathfrak{h}$. The set of positive roots in this case is given by 
\begin{center}
$R^{+} = \{\alpha_{1}, \alpha_{2}, \alpha_{3} = \alpha_{1} + \alpha_{2}\}$. 
\end{center}
Considering the Cartan-Killing form\footnote{We have $\kappa(X,Y) = 6{\rm{tr}}(XY), \forall X,Y \in \mathfrak{sl}_{3}(\mathbb{C})$, see for instance \cite[Chapter 10, \S 4]{procesi2007lie}.} $\kappa(X,Y) = {\rm{tr}}({\rm{ad}}(X){\rm{ad}}(Y)), \forall X,Y \in \mathfrak{sl}_{3}(\mathbb{C})$, it follows that $\alpha_{j} = \kappa(\cdot,h_{\alpha_{j}})$, $j =1,2,3$, such that\footnote{Notice that $\langle \alpha_{j},\alpha_{j} \rangle = \alpha_{j}(h_{\alpha_{j}}) = \frac{1}{3}, \forall j = 1,2,3.$} 
\begin{equation}
h_{\alpha_{1}} =\frac{1}{6}(E_{11} - E_{22}), \ \ h_{\alpha_{2}} =\frac{1}{6}(E_{22} - E_{33}), \ \ h_{\alpha_{3}} =\frac{1}{6}(E_{11} - E_{33}),
\end{equation}
here we consider the matrices $E_{ij}$ as being the elements of the standard basis of ${\mathfrak{gl}}_{3}(\mathbb{C})$. Moreover, we have the following relation between simple roots and fundamental weights:
\begin{center}
$\displaystyle{\begin{pmatrix}
\alpha_{1} \\ 
\alpha_{2}
\end{pmatrix} = \begin{pmatrix} \ \ 2 & -1 \\
-1 & \ \ 2\end{pmatrix} \begin{pmatrix}
\varpi_{\alpha_{1}} \\ 
\varpi_{\alpha_{2}}
\end{pmatrix}, \ \ \ \begin{pmatrix}
\varpi_{\alpha_{1}} \\ 
\varpi_{\alpha_{2}}
\end{pmatrix} = \frac{1}{3}\begin{pmatrix} 2 & 1 \\
1 & 2\end{pmatrix} \begin{pmatrix}
\alpha_{1} \\ 
\alpha_{2}
\end{pmatrix}},$
\end{center}
here we consider the Cartan matrix $C = (C_{ij})$ of $\mathfrak{sl}_{3}(\mathbb{C})$ given by 
\begin{equation}
\label{Cartanmatrix}
C = \begin{pmatrix}
 \ \ 2 & -1 \\
-1 & \ \ 2 
\end{pmatrix}, \ \ C_{ij} = \frac{2\langle \alpha_{i}, \alpha_{j} \rangle}{\langle \alpha_{j}, \alpha_{j} \rangle},
\end{equation}
for more details on the above subject, see for instance \cite{Humphreys}. Fixed the standard Borel subgroup $B \subset {\rm{SL}}_{3}(\mathbb{C})$, i.e.,
\begin{center}
$B = \Bigg \{ \begin{pmatrix} \ast & \ast & \ast \\
0 & \ast & \ast \\
0 & 0 & \ast \end{pmatrix} \in {\rm{SL}}_{3}(\mathbb{C})\Bigg\},$
\end{center}
taking $I = \emptyset$, we obtain the flag manifold defined by the  Wallach space 
\begin{equation}
{\mathbb{P}}(T_{{\mathbb{P}^{2}}}) = {\rm{SL}}_{3}(\mathbb{C})/B. 
\end{equation}
In this particular case, we have the following facts:
\begin{enumerate}
\item[(i)] $H^{2}({\mathbb{P}}(T_{{\mathbb{P}^{2}}}),\mathbb{R}) = H^{1,1}({\mathbb{P}}(T_{{\mathbb{P}^{2}}}),\mathbb{R}) = \mathbb{R}[{\bf{\Omega}}_{\alpha_{1}}] \oplus \mathbb{R}[{\bf{\Omega}}_{\alpha_{2}}]$;
\item[(ii)] ${\rm{Pic}}({\mathbb{P}}(T_{{\mathbb{P}^{2}}})) = \Big \{ \mathscr{O}_{\alpha_{1}}(s_{1}) \otimes \mathscr{O}_{\alpha_{1}}(s_{1}) \ \Big | \ s_{1}, s_{2} \in \mathbb{Z}\Big \}$.
\end{enumerate}
Here we denote by 
\begin{center}
${\bf{\Omega}}_{\alpha_{1}} \in c_{1}(\mathscr{O}_{\alpha_{1}}(1))$ \ \ \ and \ \ \ ${\bf{\Omega}}_{\alpha_{2}} \in c_{1}(\mathscr{O}_{\alpha_{1}}(1))$, 
\end{center}
the unique ${\rm{SU}}(3)$-invariant representatives inside each cohomology class. Let $\vartheta_{0}$ be the unique ${\rm{SU}}(3)$-invariant K\"{a}hler metric on ${\mathbb{P}}(T_{{\mathbb{P}^{2}}})$, such that 
\begin{center}
$[\vartheta_{0}] = c_{1}({\mathbb{P}}(T_{{\mathbb{P}^{2}}}))$. 
\end{center}
Since $\lambda({\bf{K}}_{{\mathbb{P}}(T_{{\mathbb{P}^{2}}})}^{-1}) = \delta_{B} = 2(\varpi_{\alpha_{1}} + \varpi_{\alpha_{2}})$ (see Eq. (\ref{isopic})), it follows that 
\begin{equation}
\vartheta_{0} = \langle \delta_{B}, \alpha_{1}^{\vee} \rangle {\bf{\Omega}}_{\alpha_{1}} + \langle \delta_{B}, \alpha_{2}^{\vee} \rangle {\bf{\Omega}}_{\alpha_{2}} = 2 \big ({\bf{\Omega}}_{\alpha_{1}} + {\bf{\Omega}}_{\alpha_{2}}\big),
\end{equation}
in particular, notice that $\lambda([\vartheta_{0}]) = \delta_{B}$. Now, let us describe the subgroup
\begin{equation}
{\rm{Pic}}^{0}_{\vartheta_{0}}({\mathbb{P}}(T_{{\mathbb{P}^{2}}})) = \Big \{ E \in {\rm{Pic}}({\mathbb{P}}(T_{{\mathbb{P}^{2}}})) \ \Big | \ \deg_{\vartheta_{0}}(E) = 0\Big \}.
\end{equation}
Given $E = \mathscr{O}_{\alpha_{1}}(s_{1}) \otimes \mathscr{O}_{\alpha_{1}}(s_{1})$, we have 
\begin{equation}
\deg_{\vartheta_{0}}(E) = 0 \iff s_{1}\int_{{\mathbb{P}}(T_{{\mathbb{P}^{2}}})}{\bf{\Omega}}_{\alpha_{1}} \wedge \vartheta_{0}^{2} + s_{2}\int_{{\mathbb{P}}(T_{{\mathbb{P}^{2}}})}{\bf{\Omega}}_{\alpha_{2}} \wedge \vartheta_{0}^{2} = 0.
\end{equation}
Since $\Lambda_{\vartheta_{0}}({\bf{\Omega}}_{\alpha_{1}})$ and $\Lambda_{\vartheta_{0}}({\bf{\Omega}}_{\alpha_{2}})$ are constants, it follows that  
\begin{equation}
\deg_{\vartheta_{0}}(E) = 0 \iff s_{1}\Lambda_{\vartheta_{0}}({\bf{\Omega}}_{\alpha_{1}}) + s_{2}\Lambda_{\vartheta_{0}}({\bf{\Omega}}_{\alpha_{2}}) = 0.
\end{equation}
The coefficients of the above linear equation can be computed by using relations 
\begin{equation}
\langle \lambda,\alpha_{1}^{\vee} \rangle = a, \ \ \langle \lambda,\alpha_{2}^{\vee} \rangle = b, \ \ \langle \lambda,\alpha_{3}^{\vee} \rangle = a + b,
\end{equation}
for every $\lambda = a\varpi_{\alpha_{1}} + b \varpi_{\alpha_{2}}$, $a,b \in \mathbb{Z}$, see for instance \cite[p. 14-15]{correa2023deformed}. Therefore, we have 
\begin{equation}
\deg_{\vartheta_{0}}(E) = 0 \iff \frac{3}{4}(s_{1} + s_{2}) = 0.
\end{equation}
From above, we conclude that ${\rm{Pic}}^{0}_{\vartheta_{0}}({\mathbb{P}}(T_{{\mathbb{P}^{2}}})) = \big \langle \mathscr{O}_{\alpha_{1}}(-1)\otimes \mathscr{O}_{\alpha_{2}}(1) \big \rangle $.
\end{example}

\section{Proof of main results}

In what follows, we will summarize the main ideas introduced in \cite{grantcharov2008calabi} and \cite{correa2023t} to construct $t$-Gauduchon Ricci-flat metrics on principal torus bundles over flag manifolds. Then, once we have presented the main ingredients, we will prove our main results\\

Let $P$ be a  principal $T^{2n}$-bundle over a compact Hermitian manifold $(M,g_0 ,J_M)$. By choosing a principal connection $\Theta \in \Omega^1 (M,Lie(T^{2n}))$, given by
\begin{equation}
    \Theta= 
    \begin{bmatrix}
    \sqrt{-1} \Theta_1 &\hdots & 0\\
    \vdots & \ddots &\vdots\\
    0 &\hdots &\sqrt{-1}\Theta_{2n}
    \end{bmatrix},
\end{equation}
such that $d\Theta_j = \pi^* (\psi_j )$, suppose that $\psi_j \in \Omega^{1,1}(M)$.  From this, we can construct a complex structure $J$ on $P$ by using the horizontal lift of the complex structure $J_{M}$ defined on the base to $\ker(\Theta)$ (horizontal space) and, since the vertical space is identified with the tangent space of an even-dimensional torus, we can set on the vertical direction
\begin{equation}\label{Complexstructure}
    \begin{cases}
J\Theta_{2k-1} = -\Theta_{2k}\\
        J\Theta_{2k} = \Theta_{2k-1}
    \end{cases}
\end{equation}
for every $1 \leq k \leq n$. So we have a well-defined almost complex structure $J$ on $P$. A straightforward computation shows that it is integrable (\cite{grantcharov2008calabi}, Lemma 1).\\
\par Considering the Hermitian metric $g_0$ on $M$ we can use the connection $\Theta$ to construct a Hermitian metric on $(P,J)$. In fact, we can define 
\begin{equation}
    g_P = \pi^* (g_0 ) +\frac{1}{2}{\rm{tr}}(\Theta \odot \Theta),
\end{equation}
where $a \odot b =a \otimes b + b \otimes a$, $\forall a,b \in \Omega^1 (P)$. It follows from Equation (\ref{Complexstructure}) that the fundamental 2-form $\Omega$ of $(P,g_P , J)$ is
\begin{equation}
    \Omega = \pi^* (\omega_0 ) +\frac{1}{2} {\rm{tr}}(\Theta \wedge J\Theta),
\end{equation}
where we consider $a \wedge b = a \otimes b - b \otimes a$, $\forall a,b \in \Omega^1 (P)$ and $\omega_0$ is the fundamental $2$-form of $(M,g_0 , J_M )$.\\

From the above construction, we have the following description for the Ricci forms of the canonical connections $\nabla^t$ on the Hermitian manifold $(P,\Omega , J)$ 
\begin{equation}\label{tricciform}
    p(\Omega , t) = \pi^* \Big( p(\omega_0 , 1)  + \frac{t-1}{2}\sum_{j=1}^{2n} \Lambda_{\omega_0}(\psi_j ) \psi_j \Big),
\end{equation}
where $p(\omega_0 ,1)$ is the Ricci form of the Chern connection associated with $(M, g_0,J)$ and $\Lambda_{\omega_0}$ is the dual of the Lefschetz operator $L_{\omega_0} = \omega_0 \wedge (-)$, form more details, see (\cite{grantcharov2008calabi}, Proposition 5).\\

Also, we can check that 
\begin{equation}\label{balanced}
    \delta \Omega =\pi^* (\delta \omega_0 ) + \sum_{j=1}^{2n} \pi^* (\Lambda_{\omega_0}(\psi_j ))\Theta_j. 
\end{equation}
It is worth to point out that, from Equation \ref{tricciform}, it follows that the Ricci forms of the canonical connections $\nabla^{t}$ on $(P,\Omega,J)$ are all of $(1,1)$-type, so
\begin{equation*}
      p(\Omega , t)=  p_1 (\Omega , t).
\end{equation*}
Therefore, the condition of being $t$-Gauduchon Ricci-flat for $(P, \Omega , J)$ is equivalent to
\begin{equation}
    p(\omega_0 ,1  )= -\frac{t-1}{2}\sum_{j=1}^{2n} \Lambda_{\omega_0}(\psi_j ) \psi_j .
\end{equation}
In the particular case that the base manifold $(M,g_{0},J_{M})$ is a complex flag manifold $X$, with Picard number $\varrho(X)>1$\footnote{This condition is related to the existence of primitive $(1,1)$-classes on $X$, for more details, see \cite{correa2023t}.}, and $P$ is a suitable principal torus bundle over it, we can solve the $t$-Gauduchon Ricci-flat equation proceeding as follows 
\\

Fix a $G$-invariant K\"{a}hler metric $\omega_{0}$ on $X$. Let $\mathcal{O}_{X}(1)$ be the irreducible root of the anti-canonical line bundle over $X$
\begin{equation}
    \mathcal{O}_{X}(1)= \frac{1}{{\rm{I}}(X)}K^{-1}_X
\end{equation}
where ${\rm{I}}(X)$ is the Fano index of $X$, see (\cite{kollar2013rational}, Chapter 5) for details. Also denote
\begin{equation}
\mathcal{O}_{X}(k)=\mathcal{O}_{X}(1)^{\otimes k}.
\end{equation}
Given $F_1 ,..., F_{2n-1} \in  \rm{Pic}_{\omega_0 }^{0}(X )$ and $k \in \mathbb{Z}^*$, we define
\begin{equation} \label{defini-k}
    E= \mathcal{O}_{X}(k) \oplus F_1 \oplus ... \oplus F_{2n-1}.
\end{equation}
Taking a Hermitian structure $h$ on $E$, defined by
\begin{equation}
    h= h_0 \oplus...\oplus h_{2n-1},
\end{equation}
where $h_0$ is a Hermitian structure on $\mathcal{O}_{X}(k)$ and $h_i$ is  a Hermitian structure on $F_i$, for $1 \leq i \leq 2n-1$. We have that the associated unitary frame bundle ${\rm{U}}(E)$ is a principal $T^{2n}$-bundle over $X$.\\

If the flag manifold $X$ has Picard number $\varrho(X)>1$, then there is a principal connection $\Theta \in \Omega^1 ({\rm{U}}(E),Lie(T^{2n}))$
\begin{equation*}
     \Theta= 
    \begin{bmatrix}
    \sqrt{-1} \Theta_1 &\hdots & 0\\
    \vdots & \ddots &\vdots\\
    0 &\hdots &\sqrt{-1}\Theta_{2n}
    \end{bmatrix}
\end{equation*}
satisfying the following (\cite{correa2023t}, Section 5.1):
\begin{enumerate}
    \item [(1)] $d\Theta_i = \pi^* (\psi_i )$ such that $\frac{\psi_1}{2\pi} \in c_1 (\mathcal{O}_{X}(k)) $, $\frac{\psi_2}{2\pi} \in c_1 ( F_1 )$,..., $\frac{\psi_{2n}}{2\pi} \in c_1 ( F_{2n-1} )$, 
    \item[(2)] $\psi_j$ are $G$-invariant $(1,1)$-forms,
    \item[(3)] $\Lambda_{\omega_{0}} (\psi_j ) = 0$, $\forall j = 2,...,2n$.
      
\end{enumerate}
Using this connection, we equip the principal $T^{2n}$-bundle ${\rm{U}}(E)$ with a complex structure $J$ and Hermitian metric $\Omega$ as described previously.\\

From above, it follows from Equation (\ref{tricciform}) that the Ricci form associated with a canonical connection $\nabla^t$ on $({\rm{U}}(E),\Omega,J)$, for some $t \in \mathbb{R}$, is given by
\begin{equation*}
    p (\Omega,t) = \pi^* \Big(p (\omega_0 ,1) + \frac{t-1}{2}\sum_{i=1}^{2n}\Lambda_{\omega_0} (\psi_i )\psi_i )\Big) =  \pi^* \Big(p_{\omega_0 }+ \frac{t-1}{2}\Lambda_{\omega_0} (\psi_1 )\psi_1 )\Big) 
\end{equation*}
Hence, if we choose $\omega_0$ to be a $G$-invariant Kähler-Einstein metric $\omega_0 = \lambda p_{\omega_0}$, where $p_{\omega_0}$ is the K\"{a}hler-Ricci form of $X$, since $\frac{\psi_1}{2\pi} \in c_1 (\mathcal{O}(k))$ is $G$-invariant, where $k$ is defined as in Equation \ref{defini-k}, it follows that
\begin{equation}
    \psi_1 = \frac{k}{{\rm{I}}(X)}p_{\omega_0} = \frac{k}{\lambda {\rm{I}}(X)}\omega_0. 
\end{equation}
Therefore, we obtain
\begin{equation}
    \Lambda_{\omega_0}(\psi_1 ) = 
    \frac{k}{\lambda {\rm{I}}(X)} \Lambda_{\omega_0}(\omega_0 ) =
    \frac{k \dim_{\mathbb{C}}(X )}{\lambda {\rm{I}}(X)}.
\end{equation}
Thus, the Ricci form satisfies
\begin{equation}
      p (\Omega,t) =\pi^* \Big(\frac{1}{\lambda} \omega_0 +\frac{t-1}{2}\frac{k^2 \dim_{\mathbb{C}}(X )}{\lambda^2 {\rm{I}}(X)^2} \omega_0 \Big) =  \Big(\frac{1}{\lambda} +\frac{t-1}{2}\frac{k^2 \dim_{\mathbb{C}}(X )}{\lambda^2 {\rm{I}}(X)^2} \Big) \pi^*(\omega_{0}),
\end{equation}
and we conclude that
\begin{equation}
    p (\Omega,t)=0 \iff \lambda = \frac{1-t}{2}\frac{k^2 \dim_{\mathbb{C}}(X )}{ {\rm{I}}(X)^2}.
\end{equation}
From above, since $\lambda > 0$, we can solve the $t$-Gauduchon Ricci-flat equation for every $t<1$. Moreover, if we let the metric $\omega_0$ on $X$ evolving through the Kähler-Ricci flow $\omega(s) = (\lambda-s )p_{\omega_0}$, this yields a family of Hermitian metrics
\begin{equation}
   \Omega_s =\pi^* \big ((\lambda-s )p_{\omega_0}\big) +\frac{1}{2} {\rm{tr}}(\Theta \wedge J\Theta ), \ \ \ s \in (-\infty,\lambda),    
\end{equation}
on $(U(E),J)$ that turns out to be the solution to the Chern-Ricci flow. By keeping the above notation, we have the following theorem.

\begin{theorem}\label{Teorema2}
Let $X $ be a complex flag manifold with Picard number $\varrho(X)>1$ and equipped with a Kähler-Einstein metric $\omega_0 =\lambda p_{\omega_0}$. Then, there exists a Hermitian holomorphic vector bundle $(E,h) \to X$, such that:
\begin{enumerate}
\item[(a)] $E= \mathcal{O}_{X}(k) \oplus F_1 \oplus ... \oplus F_{2n-1}$, where $\mathcal{O}_{X}(k), F_1, \ldots, F_{2n-1} \in {\rm{Pic}}(X)$;
\item[(b)] The underlying complex manifold $({\rm{U}}(E), J)$ admits a family of Hermitian metrics $\Omega(s), s \in (-\infty,\lambda)$, solving the Chern-Ricci flow
\begin{equation}
\displaystyle \frac{\partial}{\partial s}\Omega(s)= -p(\Omega(s),1), \ \ \ \Omega(0) = \Omega_0,
\end{equation}
such that $({\rm{U}}(E),\Omega_{0}) \to (X,\omega_{0})$ is a Hermitian submersion;
\item[(c)] For each $s \in (-\infty , \lambda)$, we have that $({\rm{U}}(E),\Omega_s , J)$ is $t$-Gauduchon Ricci-flat if, and only if,
\begin{equation*}
  t =1 - \frac{2(\lambda-s) {\rm{I}}(X)^2}{k^2 \dim_{\mathbb{C}}(X )},
\end{equation*}
where ${\rm{I}}(X)$ is the Fano index of $X$;
\item[(d)] ${\rm{U}}(E)$ does not admit any Kähler metric and, if $I(X)\lambda \in k\mathbb{Z}$, then 
\begin{equation}
c_{1}(T{\rm{U}}(E)) = 0 \in H^{2}({\rm{U}}(E),\mathbb{Z}), 
\end{equation}
i.e., ${\rm{U}}(E)$ is non-Kähler Calabi-Yau.
\end{enumerate}
\end{theorem}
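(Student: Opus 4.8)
The plan is to construct the bundle $(E,h)$ exactly as in the paragraphs preceding the statement and then to read off (a)--(c) directly from the curvature identity (\ref{tricciform}), reserving the integral part of (d) as the only genuinely topological point. First I would fix the $G$-invariant Kähler--Einstein metric $\omega_0=\lambda p_{\omega_0}$ and, using $\varrho(X)>1$, choose $F_1,\dots,F_{2n-1}\in{\rm{Pic}}^{0}_{\omega_0}(X)$: since ${\rm{Pic}}(X)$ is free of rank $\varrho(X)$ and the degree map has rank-$1$ image, ${\rm{Pic}}^{0}_{\omega_0}(X)$ has rank $\varrho(X)-1\ge 1$, so the needed $F_i$ exist (they may be repeated). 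Setting $E=\mathcal{O}_X(k)\oplus F_1\oplus\cdots\oplus F_{2n-1}$ and equipping ${\rm{U}}(E)$ with the connection $\Theta$, complex structure $J$ and metric $\Omega$ described above yields (a); moreover, because $\Lambda_{\omega_0}(\psi_j)=0$ for $j\ge 2$ by the defining property of $\Theta$, formula (\ref{tricciform}) reduces to a single contribution coming from $\psi_1=\frac{k}{{\rm{I}}(X)}p_{\omega_0}$.

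For (b) I would substitute $\Omega(s)=\pi^*\big((\lambda-s)p_{\omega_0}\big)+\frac{1}{2}{\rm{tr}}(\Theta\wedge J\Theta)$ into the flow. The vertical term is independent of $s$, so $\frac{\partial}{\partial s}\Omega(s)=-\pi^* p_{\omega_0}$; on the other hand (\ref{tricciform}) at $t=1$ gives $p(\Omega(s),1)=\pi^* p\big((\lambda-s)p_{\omega_0},1\big)=\pi^* p_{\omega_0}$, since the Chern--Ricci form of a Kähler metric is unchanged under constant rescaling. Hence $\frac{\partial}{\partial s}\Omega(s)=-p(\Omega(s),1)$ with $\Omega(0)=\Omega_0$, positivity of $(\lambda-s)$ for $s<\lambda$ guarantees that $\Omega(s)$ is a genuine Hermitian metric, and $({\rm{U}}(E),\Omega_0)\to(X,\omega_0)$ is a Hermitian submersion by construction. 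For (c) I would apply (\ref{tricciform}) for arbitrary $t$ to the time-$s$ base metric $(\lambda-s)p_{\omega_0}$: only $\psi_1$ survives, and using the scaling $\Lambda_{c\omega}=c^{-1}\Lambda_\omega$ one computes $\Lambda_{(\lambda-s)p_{\omega_0}}(\psi_1)=\frac{k\dim_{\mathbb{C}}(X)}{(\lambda-s){\rm{I}}(X)}$, so that
\begin{equation*}
p(\Omega(s),t)=\Big(1+\frac{t-1}{2}\,\frac{k^2\dim_{\mathbb{C}}(X)}{(\lambda-s){\rm{I}}(X)^2}\Big)\pi^* p_{\omega_0}.
\end{equation*}
As $p=p_1$ here and $\pi^* p_{\omega_0}\neq 0$, the $t$-Gauduchon Ricci-flat condition $p_1(\Omega(s),t)=0$ is equivalent to the vanishing of the scalar factor, which solves to the stated value of $t$.

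For (d), non-Kählerness follows from exactness: since $\psi_1=\frac{k}{\lambda{\rm{I}}(X)}\omega_0$ and $d\Theta_1=\pi^*\psi_1$, the form $\pi^*\omega_0=\frac{\lambda{\rm{I}}(X)}{k}\,d\Theta_1$ is exact on ${\rm{U}}(E)$. If $\Omega'$ were a Kähler form on ${\rm{U}}(E)$ (with $\dim_{\mathbb{C}}{\rm{U}}(E)=m+n$, $m=\dim_{\mathbb{C}}X$), then $\int_{{\rm{U}}(E)}\pi^*\omega_0^{\,m}\wedge\Omega'^{\,n}=0$ by Stokes, whereas pointwise positivity of this mixed product forces the integral to be strictly positive, a contradiction (cf. \cite{hofer1993remarks}, \cite{poddar2018group}). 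For the Chern class I would note that the vertical bundle of the principal $T^{2n}$-bundle is smoothly trivial and $J$ acts on it by a constant, so $c_1(T{\rm{U}}(E))=\pi^* c_1(X)$; exactness of $\pi^* p_{\omega_0}=\frac{{\rm{I}}(X)}{k}\,d\Theta_1$ then already shows $c_1(T{\rm{U}}(E))=0$ in $H^2({\rm{U}}(E),\mathbb{R})$, and under the hypothesis $I(X)\lambda\in k\mathbb{Z}$ the Gysin (transgression) sequence for the circle factor associated to $\mathcal{O}_X(k)$ upgrades this to vanishing in $H^2({\rm{U}}(E),\mathbb{Z})$.

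I expect this last, integral refinement to be the main obstacle. Passing from $c_1(T{\rm{U}}(E))=0$ over $\mathbb{R}$ to vanishing over $\mathbb{Z}$ requires controlling the torsion of $H^2({\rm{U}}(E),\mathbb{Z})$, and the delicate point is to make precise how the hypothesis $I(X)\lambda\in k\mathbb{Z}$ renders the transgressed class $\pi^* c_1(X)$ exactly trivial rather than merely torsion. Everything in (a)--(c), by contrast, reduces to a direct application of (\ref{tricciform}) together with the scaling behaviour of the Lefschetz contraction $\Lambda$, and so should be essentially bookkeeping.
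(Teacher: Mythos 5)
Your proposal is correct and follows essentially the same route as the paper: the same bundle $E=\mathcal{O}_X(k)\oplus F_1\oplus\cdots\oplus F_{2n-1}$ with $F_i\in{\rm{Pic}}^0_{\omega_0}(X)$, the same ansatz $\Omega_s=\pi^*\big((\lambda-s)p_{\omega_0}\big)+\tfrac12{\rm{tr}}(\Theta\wedge J\Theta)$ verified against Equation (\ref{tricciform}), and the same scaling computation $\Lambda_{c\omega_0}=c^{-1}\Lambda_{\omega_0}$ yielding the stated value of $t$ in (c). The only divergence is in (d), where you spell out the Stokes/positivity argument for non-K\"ahlerness and the transgression step for integral vanishing of $c_1$, both of which the paper disposes of by citation or brief assertion; your added detail is consistent with (and slightly more complete than) the paper's treatment.
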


\begin{proof} 
Given $F_1 ,..., F_{2n-1} \in  \rm{Pic}_{\omega_0 }^{0}(X )$ and $k \in \mathbb{Z}^*$, we define
\begin{equation} 
    E= \mathcal{O}_{X}(k) \oplus F_1 \oplus ... \oplus F_{2n-1}.
\end{equation}
From the construction previously described, we have a one-parameter family of Hermitian metrics on $\Omega_{s}$ on $({\rm{U}}(E), J)$, such that 
\begin{equation*}
   \Omega_s =\pi^* \big ((\lambda-s )p_{\omega_0}\big) +\frac{1}{2} {\rm{tr}}(\Theta \wedge J\Theta ).
\end{equation*}
By construction, we have 
\begin{equation*}
    \frac{\partial}{\partial s} \Omega_s = \frac{\partial}{\partial s} \Big( \pi^* (\lambda-s )p_{\omega_0} +\frac{1}{2} {\rm{tr}}(\Theta \wedge J\Theta ) \Big) = -\pi^* p_{\omega_0} = -p(\Omega_s , 1),
\end{equation*}
where the last equality follows from Equation (\ref{tricciform}). Thus, $\Omega_{s}$ is a solution to the Chern-Ricci flow starting at $({\rm{U}}(E),J,\Omega_0 )$. Thus, we obtain items (a) and (b). For item (c), observing that $\rm{Pic}_{\omega_0 }^{0}(X ) = \rm{Pic}_{c\omega_0 }^{0}(X )$ and
 \begin{equation}
\Lambda_{c \omega_{0}}(-) = \frac{1}{c}\Lambda_{\omega_{0}}(-),
 \end{equation}
for every $c \in \mathbb{R}^+$, it follows from the previous construction that 
\begin{equation}
 p (\Omega_{s},t)=0 \iff \lambda - s = \frac{1-t}{2}\frac{k^2 \dim_{\mathbb{C}}(X )}{ {\rm{I}}(X)^2} \iff t =1 - \frac{2(\lambda-s) {\rm{I}}(X)^2}{k^2 \dim_{\mathbb{C}}(X )}.
\end{equation}
We observe now that ${\rm{U}}(E)$ do not carry a K\"{a}hler metric for purely topological reasons, see for instance \cite{hofer1993remarks} and \cite{poddar2018group}. In particular, from Eq. (\ref{tricciform}), it follows that 
\begin{equation}
p(\Omega_{0},1) = \lambda \pi^{\ast}(p_{\omega_{0}}) =  \frac{\lambda{\rm{I}}(X)}{k} \pi^{\ast}(\psi_{1}) = \frac{\lambda{\rm{I}}(X)}{k}{\rm{d}}\Theta_{1}.
\end{equation}
Thus, we conclude that $c_{1}(T{\rm{U}}(E)) = [\frac{p(\Omega_{0},1)}{2\pi}] = 0 \in H^{2}({\rm{U}}(E),\mathbb{R})$. From this, if $\lambda I(X) \in k\mathbb{Z}$, then $c_{1}(T{\rm{U}}(E)) = [\frac{p(\Omega_{0},1)}{2\pi}] = 0 \in H^{2}({\rm{U}}(E),\mathbb{Z})$
\end{proof}

From item $(c)$ of the last theorem, we have the following consequence.

\begin{coro}
The Chern-Ricci flow does not preserve the property of being $t$-Gauduchon Ricci-flat.
\end{coro}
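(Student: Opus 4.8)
The plan is to read the statement off item (c) of Theorem~\ref{Teorema2}, which singles out, for each time $s$, the \emph{unique} parameter $t$ making the evolving metric $\Omega_s$ $t$-Gauduchon Ricci-flat; the content of the corollary is simply that this distinguished parameter genuinely depends on $s$. First I would fix a complex flag manifold $X$ with $\varrho(X)>1$ and a Kähler-Einstein metric $\omega_0=\lambda p_{\omega_0}$, and apply Theorem~\ref{Teorema2} with some fixed $k\in\mathbb{Z}^{*}$ to produce a bundle $E=\mathcal{O}_{X}(k)\oplus F_1\oplus\cdots\oplus F_{2n-1}$ and the corresponding family $\Omega_s$, $s\in(-\infty,\lambda)$, solving $\frac{\partial}{\partial s}\Omega_s=-p(\Omega_s,1)$.

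Next I would set
\[
t_0:=1-\frac{2\lambda\,{\rm I}(X)^2}{k^2\dim_{\mathbb{C}}(X)}.
\]
Because $\lambda>0$ and ${\rm I}(X),k,\dim_{\mathbb{C}}(X)$ are nonzero, we have $t_0<1$, so the assertion is non-vacuous (for $t=1$ the Chern-Ricci flat condition is trivially preserved). Evaluating item (c) at $s=0$ shows that the initial metric $\Omega_0$ is $t_0$-Gauduchon Ricci-flat, so the flow does start inside the condition.

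The key step is to observe that the map
\[
s\longmapsto t(s):=1-\frac{2(\lambda-s)\,{\rm I}(X)^2}{k^2\dim_{\mathbb{C}}(X)}
\]
is \emph{strictly increasing} on $(-\infty,\lambda)$, its slope $2\,{\rm I}(X)^2/(k^2\dim_{\mathbb{C}}(X))$ being strictly positive; hence $t(\cdot)$ is injective and $t(s)=t_0=t(0)$ forces $s=0$. Invoking once more the equivalence in item (c) --- now with the parameter held fixed at $t_0$ --- the metric $\Omega_s$ is $t_0$-Gauduchon Ricci-flat exactly when $t(s)=t_0$, that is, only at $s=0$. Therefore $\Omega_s$ fails to be $t_0$-Gauduchon Ricci-flat for every $s\in(-\infty,\lambda)\setminus\{0\}$, and the Chern-Ricci flow issuing from the $t_0$-Gauduchon Ricci-flat metric $\Omega_0$ leaves the condition instantly. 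I expect no genuine analytic obstacle here, since all the curvature computations are already packaged into item (c); the only point requiring care is rhetorical, namely choosing $t_0\neq 1$ so that the statement is not vacuous and using the strict monotonicity of $t(\cdot)$ to certify that the condition is actually lost rather than merely reparametrized.
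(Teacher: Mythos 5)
Your argument is correct and is essentially the paper's own: the corollary is deduced directly from item (c) of Theorem \ref{Teorema2}, since the unique admissible parameter $t(s)=1-\tfrac{2(\lambda-s){\rm I}(X)^2}{k^2\dim_{\mathbb{C}}(X)}$ varies (strictly monotonically) with $s$, so the $t_0$-Gauduchon Ricci-flat condition holding at $s=0$ fails for every $s\neq 0$. Your added remarks on non-vacuousness ($t_0<1$) and strict monotonicity merely make explicit what the paper leaves implicit.
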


Now we slightly change the construction used in Theorem \ref{Teorema2} to construct an example of the Chern-Ricci flow that does preserve the property of the metric been balanced. We have the following result:

\begin{theorem}
\label{TeoremaBa}
Let $X $ be a complex flag manifold with Picard number $\varrho(X)>1$ and equipped with a Kähler-Einstein metric $\omega_0 =\lambda p_{\omega_0}$. Then, there exists a Hermitian holomorphic vector bundle $(E,h) \to X$, such that:
\begin{enumerate}
\item[(a)] $E=  F_1 \oplus ... \oplus F_{2n}$, where $F_1, \ldots, F_{2n} \in {\rm{Pic}}(X)$;
\item[(b)] The underlying complex manifold $({\rm{U}}(E), J)$ admits a family of Hermitian metrics $\Omega(s), s \in (-\infty,\lambda)$, solving the geometric flow
\begin{equation}
\displaystyle \frac{\partial}{\partial s}\Omega(s)= -p(\Omega(s),t), \ \ \ \Omega(0) = \Omega_0,
\end{equation}
$\forall t \in \mathbb{R}$, such that $({\rm{U}}(E),\Omega_{0}) \to (X,\omega_{0})$ is a Hermitian submersion;
\item[(c)] For each $s \in (-\infty , \lambda)$, we have that $({\rm{U}}(E),\Omega(s) , J)$ is a balanced Hermitian manifold, i.e., $\delta \Omega(s) = 0$, $\forall s \in (-\infty , \lambda)$.
\end{enumerate}

\end{theorem}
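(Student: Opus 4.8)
The plan is to re-run the construction of Theorem \ref{Teorema2}, but to remove the single summand $\mathcal{O}_{X}(k)$ of nonzero degree and replace it by a degree-zero line bundle, so that \emph{every} summand of $E$ lies in ${\rm{Pic}}^{0}_{\omega_0}(X)$. Concretely, since $\varrho(X)>1$ guarantees that ${\rm{Pic}}^{0}_{\omega_0}(X)$ is nontrivial, I would fix $F_1,\ldots,F_{2n}\in{\rm{Pic}}^{0}_{\omega_0}(X)$ (with at least one of them nontrivial, so as to keep ${\rm{U}}(E)$ non-Kähler) and set $E=F_1\oplus\cdots\oplus F_{2n}$ with the diagonal Hermitian structure $h=h_1\oplus\cdots\oplus h_{2n}$. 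Then ${\rm{U}}(E)$ is again a principal $T^{2n}$-bundle over $X$, and by the construction of (\cite{correa2023t}, Section 5.1) one obtains a principal connection $\Theta$ with components satisfying $d\Theta_j=\pi^*(\psi_j)$, where $\frac{\psi_j}{2\pi}\in c_1(F_j)$ is the $G$-invariant representative. Equipping ${\rm{U}}(E)$ with the induced complex structure $J$ and the family $\Omega_s=\pi^*\big((\lambda-s)p_{\omega_0}\big)+\frac{1}{2}{\rm{tr}}(\Theta\wedge J\Theta)$ yields item (a) and the Hermitian submersion in (b).

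The decisive point, and the reason this single modification controls all canonical connections simultaneously, is that each $\psi_j$ is now a $G$-invariant primitive form. Indeed, $\Lambda_{\omega_0}(\psi_j)$ is a $G$-invariant function and hence constant, and the identity $\psi_j\wedge\omega_0^{\,n-1}=\frac{1}{n}\Lambda_{\omega_0}(\psi_j)\,\omega_0^{\,n}$ shows $\deg_{\omega_0}(F_j)=0\iff\Lambda_{\omega_0}(\psi_j)=0$; thus $\Lambda_{\omega_0}(\psi_j)=0$ for \emph{all} $j=1,\ldots,2n$. Whereas Theorem \ref{Teorema2} killed the torsion term in Equation (\ref{tricciform}) by specializing to $t=1$, here the term vanishes for every $t$ because all the $\Lambda_{\omega_0}(\psi_j)$ vanish. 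Feeding this into Equation (\ref{tricciform}) gives
\[
p(\Omega_s,t)=\pi^*\!\left(p(\omega_0,1)+\frac{t-1}{2}\sum_{j=1}^{2n}\Lambda_{\omega_0}(\psi_j)\,\psi_j\right)=\pi^*(p_{\omega_0}),
\]
independently of $t$ and of $s$ (using that the Kähler-Ricci form is unchanged under the scaling $\omega_0\mapsto(\lambda-s)p_{\omega_0}$, and that ${\rm{Pic}}^{0}_{\omega_0}(X)={\rm{Pic}}^{0}_{c\omega_0}(X)$). Differentiating the explicit $\Omega_s$ then gives $\frac{\partial}{\partial s}\Omega_s=-\pi^*(p_{\omega_0})=-p(\Omega_s,t)$ for every $t\in\mathbb{R}$, which is precisely item (b): the same family $\Omega_s$ solves the geometric flow for all canonical connections at once.

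For item (c) I would invoke Equation (\ref{balanced}). The base metric $(\lambda-s)p_{\omega_0}$ is Kähler, hence co-closed, so $\pi^*(\delta\omega_0)=0$; and since $\Lambda_{\omega_0}(\psi_j)=0$ for every $j$, the remaining sum $\sum_{j}\pi^*\!\big(\Lambda_{\omega_0}(\psi_j)\big)\Theta_j$ vanishes identically. Therefore $\delta\Omega_s=0$ for all $s\in(-\infty,\lambda)$, so each $({\rm{U}}(E),\Omega_s,J)$ is balanced. Note that this degree-zero choice is exactly what simultaneously forces the flow to be $t$-independent and the metrics to be balanced, which is the conceptual heart of the statement.

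The only point requiring genuine verification, rather than a true obstacle, is the existence of a principal connection $\Theta$ all of whose curvature components $\psi_j$ are the $G$-invariant primitive representatives of the classes $c_1(F_j)$; this is what the hypothesis $\varrho(X)>1$ provides, since it supplies enough primitive $(1,1)$-classes for the construction of \cite{correa2023t} to produce such a $\Theta$. I would also record, as in Theorem \ref{Teorema2} and for the same topological reasons (see \cite{hofer1993remarks}, \cite{poddar2018group}), that ${\rm{U}}(E)$ admits no Kähler metric provided some $F_j$ is nontrivial; this is what makes the resulting balanced solutions non-trivial and is precisely the input that Corollary \ref{Balanced} will exploit.
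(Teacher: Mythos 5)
Your proposal is correct and follows essentially the same route as the paper: take all summands $F_j$ in ${\rm{Pic}}^{0}_{\omega_0}(X)$, use the connection from (\cite{correa2023t}, Section 5.1) with $\Lambda_{\omega_0}(\psi_j)=0$ for all $j$ so that the torsion term in Equation (\ref{tricciform}) vanishes for every $t$, and conclude balancedness from Equation (\ref{balanced}). The only difference is that you spell out why the degree-zero condition forces $\Lambda_{\omega_0}(\psi_j)=0$ via $G$-invariance and the primitivity identity, a step the paper absorbs into the cited construction.
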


\begin{proof}
Define $E = F_1 \oplus ... \oplus F_{2n}$, such that $F_i \in {\rm{Pic}}^{0}_{\omega_{0}}(X)$, $i = 1,\ldots,2n$, and fixed a Hermitian structure
\begin{equation*}
    h= h_1 \oplus...\oplus h_{2n}
\end{equation*}
 on $E$, where $h_i$ is a Hermitian structure on $F_i$, for $1 \leq i \leq 2n$. From this data, consider the principal $T^{2n}$-bundle ${\rm{U}}(E)$.\\

In the above setting, we can find a connection $\Theta \in \Omega^1 ({\rm{U}}(E),Lie(T^{2n}))$ 
\begin{equation*}
     \Theta= 
    \begin{bmatrix}
    \sqrt{-1} \Theta_1 &\hdots & 0\\
    \vdots & \ddots &\vdots\\
    0 &\hdots &\sqrt{-1}\Theta_{2n}
    \end{bmatrix}
\end{equation*}
satsifying the following properties (\cite{correa2023t}, Section 5.1)
\begin{enumerate}
    \item [(1)] $d\Theta_i = \pi^* (\psi_i )$ such that $\frac{\psi_i}{2\pi} \in c_1 ( F_i )$,
    \item[(2)] $\psi_j$ are $G$-invariant $(1,1)$-forms,
    \item[(3)] $\Lambda_{\omega_{0}} (\psi_j ) = 0$, $\forall j = 1,...,2n$.
\end{enumerate} 
As before, we can equip ${\rm{U}}(E)$ with the complex structure $J$ and Hermitian metric $\Omega$ defined as follows:
\begin{enumerate}
    \item [a)] $J|_{\ker(\Theta)}$ is given by the lift of the complex structure of $X$,
    \item[b)] $J\Theta_{2j-1} = -\Theta_{2j}$ and $J\Theta_{2j} = \Theta_{2j-1}$, $j=1,...,n$,
    \item[c)] $\Omega_0  = \pi^* ( \omega_{0} ) + \frac{1}{2}{\rm{tr}}(\Theta \wedge J\Theta)$. 
\end{enumerate}
From above, considering that K\"{a}hler-Ricci flow $\omega(s) =  (\lambda-s )p_{\omega_0}$ on $X$, starting at $\omega_{0} = \lambda p_{\omega_{0}}$, we obtain a one-parameter family of Hermitian metrics $\Omega_{s}$ on the complex manifold $({\rm{U}}(E),J)$, such that 
\begin{equation*}
   \Omega_s =\pi^* \big ( (\lambda-s )p_{\omega_0} \big ) +\frac{1}{2} {\rm{tr}}(\Theta \wedge J\Theta ).
\end{equation*}
From above, it follows that 
\begin{equation*}
  \frac{\partial}{\partial s} \Omega_{s} = \frac{\partial}{\partial s} \Big( \pi^* (\lambda-s )p_{\omega_0} +\frac{1}{2} {\rm{tr}}(\Theta \wedge J\Theta ) \Big) = -\pi^* p_{\omega_0 }.
\end{equation*}
Observing that $\rm{Pic}_{\omega_0 }^{0}(X ) = \rm{Pic}_{c\omega_0 }^{0}(X )$ and
 \begin{equation}
\Lambda_{c \omega_{0}}(-) = \frac{1}{c}\Lambda_{\omega_{0}}(-),
 \end{equation}
for every $c \in \mathbb{R}^+$, since $\Lambda_{\omega_0} (\psi_i )=0$, $\forall i = 1,\ldots,2n$, Equation \ref{tricciform} implies that
\begin{equation}
\label{ricci11type}
    p(\Omega_s ,t) = \pi^* p_{\omega_s}=\pi^* p_{\omega_0},  
\end{equation}
i.e., $\frac{\partial}{\partial s}\Omega_s =-p(\Omega_s ,t)$. Hence, we obtain items $(a)$ and $(b)$.\\

In order to prove item $(c)$, we notice that 
\begin{equation*}
 \delta\Omega_s = (\lambda-s)\pi^* (\delta \omega_0 ) =0,
\end{equation*}
see Equation \ref{balanced}. Therefore, $\Omega_s$ is balanced for every $s \in (-\infty , \lambda)$, which concludes the proof.
\end{proof}
Applying the result of the last theorem to $t=-1$ and using that Ricci-forms are all in this case of $(1,1)$-type (Equation \ref{ricci11type}), we obtain the following result:

\begin{Coro}
In the setting of Theorem \ref{TeoremaBa}, the family of Hermitian metrics $\Omega(s), s \in (-\infty,\lambda)$, on $({\rm{U}}(E), J)$, defines a solution to the pluriclosed flow
\begin{equation}
\displaystyle \frac{\partial }{\partial s} \Omega(s)= -\rho_{B}^{1,1}(\Omega(s)), \ \  \Omega(0)=\Omega_0,
\end{equation}
such that $({\rm{U}}(E),\Omega(s), J)$ is a balanced Hermitian manifold for all $s \in (-\infty , \lambda)$.
\end{Coro}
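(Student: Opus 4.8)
The plan is to deduce the corollary directly from Theorem \ref{TeoremaBa} by specializing the parameter $t$ to $-1$ and matching the resulting evolution equation with the definition of the pluriclosed flow. First I would recall from item (b) of Theorem \ref{TeoremaBa} that the one-parameter family
\begin{equation*}
\Omega_s = \pi^*\big((\lambda - s)p_{\omega_0}\big) + \tfrac{1}{2}{\rm{tr}}(\Theta \wedge J\Theta)
\end{equation*}
satisfies $\frac{\partial}{\partial s}\Omega_s = -p(\Omega_s, t)$ for \emph{every} $t \in \mathbb{R}$. This is precisely the content of Equation (\ref{ricci11type}): the defining condition $\Lambda_{\omega_0}(\psi_i) = 0$ for all $i$ annihilates the $t$-dependent term in Equation (\ref{tricciform}), so that $p(\Omega_s, t) = \pi^* p_{\omega_0}$ is independent of $t$. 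Taking $t = -1$ gives the single identity $\frac{\partial}{\partial s}\Omega_s = -p(\Omega_s, -1)$.

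The key step is then to identify $p(\Omega_s, -1)$ with $\rho_{B}^{1,1}(\Omega_s)$. Here I would invoke the structural observation recorded right after Equation (\ref{tricciform}): because all the Ricci forms of the canonical connections on $({\rm{U}}(E), \Omega_s, J)$ are pullbacks of forms from the base flag manifold, they are automatically of $(1,1)$-type, whence $p(\Omega_s, t) = p_1(\Omega_s, t)$ for every $t$. In particular, at $t = -1$,
\begin{equation*}
p(\Omega_s, -1) = p_1(\Omega_s, -1) = \rho_{B}^{1,1}(\Omega_s),
\end{equation*}
where the last equality is exactly the definition $\rho_{B}^{1,1}(\omega) := \rho_1(\omega, -1)$ set at the end of Section 2 (recalling that $p$ and $\rho$ denote the same Ricci form). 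Combining the two displays yields $\frac{\partial}{\partial s}\Omega_s = -\rho_{B}^{1,1}(\Omega_s)$ with $\Omega(0) = \Omega_0$, which is precisely the pluriclosed flow.

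Finally, the balanced assertion requires no new work: it is item (c) of Theorem \ref{TeoremaBa}, which already gives $\delta \Omega_s = 0$ for all $s \in (-\infty, \lambda)$ via Equation (\ref{balanced}) together with $\delta \omega_0 = 0$ (as $\omega_0$ is Kähler) and the vanishing $\Lambda_{\omega_0}(\psi_i) = 0$. I do not expect a genuine obstacle in this argument; the only point deserving emphasis is the identification $\rho_{B}^{1,1}(\Omega_s) = p(\Omega_s, -1)$. This equality is special to the present construction and would fail on a generic Hermitian manifold, where the Bismut–Ricci form carries a nontrivial component outside $\Lambda^{1,1}$. Here it holds precisely because the torus-bundle geometry over a flag manifold with $\Lambda_{\omega_0}(\psi_i) = 0$ forces every canonical Ricci form to descend from the base and hence to be of pure $(1,1)$-type, collapsing the distinction between $p(\Omega_s, -1)$ and $p_1(\Omega_s, -1)$.
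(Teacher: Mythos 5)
Your proposal is correct and follows essentially the same route as the paper: the corollary is obtained by specializing Theorem \ref{TeoremaBa} to $t=-1$, using that $p(\Omega_s,t)=\pi^*p_{\omega_0}$ is of pure $(1,1)$-type (Equation (\ref{ricci11type})) so that $p(\Omega_s,-1)=p_1(\Omega_s,-1)=\rho_{B}^{1,1}(\Omega_s)$, with the balanced claim being item (c) of the theorem. Your closing remark on why the identification $p(\Omega_s,-1)=\rho_{B}^{1,1}(\Omega_s)$ is special to this construction is a useful clarification that the paper leaves implicit.
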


Now we wish to understand what happens to the flows given in Theorem \ref{Teorema2} and Theorem \ref{TeoremaBa} as $s$ approaches to $\lambda$, in the Gromov-Hausdorff sense. For more details on the Gromov-Hausdorff distance, see \cite[Chapter 7]{brin2001course}.\\

In order to prove our next result, let us introduce some terminology. Let $\pi \colon P \to (M,g_{M})$ be a principal $G$-bundle, such that $(M,g_{M})$ is a compact and connected Riemannian manifold and $G$ is a compact and connected Lie group. Fixed a principal connection $\Theta \in \Omega^1 (P, \mathfrak{g})$ and fixed a bi-invariant metric $\langle -,- \rangle$ on $G$, for every continuous function $f \colon [0,T) \to \mathbb{R}$, such that $T > 0$ and $f > 0$, we define the following Riemannian metrics on $P$
\begin{equation}
\label{GHmetricdef}
g_{t} := f(t)\pi^{\ast}g_{M} + \langle \Theta,\Theta \rangle. 
\end{equation}
Suppose now that ${\rm{Hol}}_{u}(\Theta) \subset G$ is a closed Lie subgroup. Given $u \in P$, we consider the Riemannian manifold $( G/{\rm{Hol}}_{u}(\Theta),g_{T})$, such that $g_{T}$ is the $G$-invariant Riemannian metric induced by $\langle -, - \rangle$ which makes the canonical projection
\begin{center}
$\pi \colon G \to G/{\rm{Hol}}_{u}(\Theta)$ 
\end{center}
a Riemannian submersion. In the above setting, we have the following result.

\begin{theorem}[\cite{correa2024bundle}]
\label{GHtheorem}
If $\lim_{t \rightarrow T}f(t) = 0$, then
\begin{equation}
\lim_{t \to T} d_{GH}\big ((P,d_{g_{t}}),(G/{\rm{Hol}}_{u}(\Theta),d_{g_{T}})\big)  = 0,
\end{equation}
where  $d_{GH}$ denotes the Gromov-Hausdorff distance.
\end{theorem}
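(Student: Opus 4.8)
The plan is to produce, for each $t$ close to $T$, an explicit almost-isometry onto $G/{\rm{Hol}}_{u}(\Theta)$ and to estimate its distortion by hand, exploiting the two elementary features of the metrics $g_{t}=f(t)\pi^{\ast}g_{M}+\langle\Theta,\Theta\rangle$. First, along a fibre $\pi^{-1}(x)$ the term $\pi^{\ast}g_{M}$ vanishes, and via the orbit map $a\mapsto p\cdot a$ together with the equivariance of $\Theta$ and the bi-invariance of $\langle-,-\rangle$, the restriction $g_{t}|_{\pi^{-1}(x)}$ is the fixed bi-invariant metric; hence every fibre is isometric to $(G,\langle-,-\rangle)$. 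Second, a horizontal curve covering a base curve $\alpha$ has $g_{t}$-length exactly $\sqrt{f(t)}\,\ell_{g_{M}}(\alpha)$, so horizontal motion becomes arbitrarily cheap as $f(t)\to 0$. Writing $H:={\rm{Hol}}_{u}(\Theta)$ and $x_{0}=\pi(u)$, I would define $\Phi\colon P\to H\backslash G$ by parallel transport to the reference fibre: for $p\in P$ pick a path $\alpha$ from $x_{0}$ to $\pi(p)$, let $\widetilde{\alpha}$ be its horizontal lift from $u$, write $p=\widetilde{\alpha}(1)\cdot b_{p}$ with $b_{p}\in G$, and set $\Phi(p):=H b_{p}$. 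The first step is to check well-definedness: changing $\alpha$ replaces $b_{p}$ by $h^{-1}b_{p}$ for the loop holonomy $h\in H$, leaving $Hb_{p}$ fixed; the same computation shows that $\Phi$ is \emph{constant along horizontal lifts}. Since inversion $g\mapsto g^{-1}$ is an isometry of the bi-invariant metric, $H\backslash G\cong G/H$ isometrically, so I may regard $\Phi$ as a surjection onto $(G/H,d_{g_{T}})$.

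For the lower bound I would verify that $\Phi$ is $1$-Lipschitz. On vertical vectors $\Phi$ is the fibre isometry followed by the Riemannian submersion $G\to H\backslash G$, hence non-expanding; on horizontal vectors $d\Phi=0$ by the constancy along horizontal lifts; and because $g_{t}$ makes the two distributions orthogonal with $|v|_{g_{t}}^{2}\ge |v^{\mathrm{vert}}|_{g_{t}}^{2}$, we obtain $|d\Phi(v)|\le|v|_{g_{t}}$. Integrating along paths yields $d_{H\backslash G}(\Phi(p),\Phi(q))\le d_{g_{t}}(p,q)$ for all $p,q$ and all $t$, with no error term.

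The real content is the matching upper bound. Given $p,q$ I would concatenate four legs: a horizontal lift carrying $p$ back to $u\cdot b_{p}$ in the fibre over $x_{0}$ (cost $\le\sqrt{f(t)}({\rm diam}\,M+\delta')$); a horizontal lift of a based loop whose holonomy $h'$ approximates the element $h_{0}\in H$ realizing $d_{H\backslash G}(Hb_{p},Hb_{q})=d_{G}(h_{0}b_{p},b_{q})$; a fibre geodesic from $u\cdot h'b_{p}$ to $u\cdot b_{q}$, of length $d_{G}(h'b_{p},b_{q})\le d_{H\backslash G}(\Phi(p),\Phi(q))+d_{G}(h',h_{0})$ by bi-invariance; and a final horizontal lift to $q$. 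The delicate point, and the main obstacle, is to bound the holonomy loop \emph{uniformly in $p,q$}, since $h_{0}$ varies with the pair and a priori could require a long loop. This is exactly where the hypothesis that $H$ is \emph{closed} in $G$ is used: $H$ is then a compact Lie subgroup, the set $S_{R}\subseteq H$ of holonomies of loops of length $\le R$ satisfies $S_{R}\cdot S_{R'}\subseteq S_{R+R'}$, $S_{R}^{-1}=S_{R}$ and $\bigcup_{R}S_{R}=H$, so by compactness some $S_{R(\delta)}$ is $\delta$-dense in $H$. Choosing $h'\in S_{R(\delta)}$ with $d_{G}(h',h_{0})\le\delta$ makes the holonomy step cost at most $\sqrt{f(t)}\,R(\delta)$ with $R(\delta)$ independent of $p,q$. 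Collecting the four legs gives $d_{g_{t}}(p,q)\le d_{H\backslash G}(\Phi(p),\Phi(q))+\delta+C_{\delta}\sqrt{f(t)}$, where $C_{\delta}=2\,{\rm diam}\,M+R(\delta)$.

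Combining the two bounds, $\Phi$ is a surjection satisfying $\sup_{p,q}|d_{g_{t}}(p,q)-d_{H\backslash G}(\Phi(p),\Phi(q))|\le\delta+C_{\delta}\sqrt{f(t)}$. Fixing $\delta>0$ and then letting $t\to T$, so that $f(t)\to 0$, forces the distortion below $2\delta$; by the standard estimate relating the distortion of an almost-surjection to the Gromov--Hausdorff distance, this gives $d_{GH}\big((P,d_{g_{t}}),(G/H,d_{g_{T}})\big)\le 4\delta$ for $t$ near $T$. As $\delta$ is arbitrary, the limit is $0$, which is the claim. I expect the bookkeeping of the four-leg path to be routine; the only genuinely substantive step is the uniform holonomy-loop bound, whose validity rests squarely on the closedness (equivalently compactness) of ${\rm{Hol}}_{u}(\Theta)$.
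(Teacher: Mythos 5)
The paper itself offers no proof of this statement --- it is imported verbatim from \cite{correa2024bundle} as a quoted theorem --- so there is no internal argument to compare yours against; I can only assess your proof on its own terms, and it is correct. The map $\Phi\colon P\to H\backslash G$ by parallel transport to the reference fibre is well defined and constant along horizontal lifts for exactly the reasons you give (a change of base path multiplies $b_{p}$ on the left by a loop holonomy), the fibrewise isometry with $(G,\langle-,-\rangle)$ plus $g_{t}$-orthogonality of the horizontal and vertical distributions gives the clean $1$-Lipschitz lower bound, and your four-leg path gives the matching upper bound. You have also correctly isolated the one genuinely substantive point: the uniform-in-$(p,q)$ bound on the length of the holonomy loop, which follows from monotonicity of the sets $S_{R}$, $\bigcup_{R}S_{R}=H$, and compactness of $H$ --- and this is precisely where closedness of ${\rm{Hol}}_{u}(\Theta)$ enters (without it the limit would be $G/\overline{{\rm{Hol}}_{u}(\Theta)}$). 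Two small points you should make explicit in a final write-up: (i) smoothness of $\Phi$ (choose the base paths smoothly in a chart; independence of the choice glues the local definitions), which is needed to run the infinitesimal estimate $|d\Phi(v)|\le|v|_{g_{t}}$; and (ii) the identification of the Riemannian submersion distance on $H\backslash G$ with the orbit-space distance $\min_{h\in H}d_{G}(hb_{p},b_{q})$, which you use to produce the minimizer $h_{0}$. Both are standard, and neither affects the validity of the argument.
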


Considering the $T^{2n}$-bundle ${\rm{U}}(E) \to X$ constructed in Theorem \ref{Teorema2} and Theorem \ref{TeoremaBa}, we notice that the family of Hermitian metrics 
\begin{equation}
     \Omega_s = (\lambda-s ) \pi^* p_{\omega_0} +\frac{1}{2} {\rm{tr}}(\Theta \wedge J\Theta ),
\end{equation}
such that $s \in (-\infty , \lambda)$, is a particular example of the Riemannian metric given in Equation \ref{GHmetricdef}. If we suppose that the holonomy group ${\rm{Hol}}_{u} (\Theta) \subset T^{2n}$ of the principal connection $\Theta$ is closed, since a closed and connected subgroup of a torus is also a torus, we have that ${\rm{Hol}}_{u} (\Theta) = T^{l}$, for some $0 \leq l \leq 2n$. Therefore, from Theorem \ref{GHtheorem}, we obtain the following result:

\begin{coro}\label{CoroF}
If the holonomy group ${\rm{Hol}}_{u} (\Theta)$ is closed in $T^{2n}$, then
\begin{equation*}
    \lim_{s \rightarrow \lambda} d_{GH} \big(({\rm{U}}(E) , d_{\Omega_s}), (T^{2n-l}, d_g )\big) =0,
\end{equation*}
where $g$ be the normal metric on $T^{2n-l}$ and $l = \dim{\rm{Hol}}_{u} (\Theta)$.
\end{coro}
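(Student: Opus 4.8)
The plan is to recognize the Riemannian metric underlying each $\Omega_{s}$ as an instance of the family in Equation (\ref{GHmetricdef}) and then invoke Theorem \ref{GHtheorem} essentially as a black box. First I would pass from the fundamental $2$-form to its associated Riemannian metric: using the relation $g_{P} = \pi^{\ast}(g_{0}) + \tfrac{1}{2}{\rm{tr}}(\Theta \odot \Theta)$ between the metric and its fundamental form, the metric associated with $\Omega_{s} = \pi^{\ast}\big((\lambda-s)p_{\omega_{0}}\big) + \tfrac{1}{2}{\rm{tr}}(\Theta \wedge J\Theta)$ is
\begin{equation*}
g_{\Omega_{s}} = (\lambda-s)\,\pi^{\ast}(g_{p_{\omega_{0}}}) + \tfrac{1}{2}{\rm{tr}}(\Theta \odot \Theta),
\end{equation*}
where $g_{p_{\omega_{0}}}$ is the metric on $X$ whose fundamental form is $p_{\omega_{0}}$. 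This matches Equation (\ref{GHmetricdef}) on setting $G = T^{2n}$, $(M,g_{M}) = (X, g_{p_{\omega_{0}}})$, scaling function $f(s) = \lambda-s$, and choosing the bi-invariant metric on $T^{2n}$ to be the flat one for which $\langle \Theta,\Theta\rangle = \tfrac{1}{2}{\rm{tr}}(\Theta \odot \Theta)$, i.e. the one making $\{\Theta_{j}\}$ an orthonormal coframe on $Lie(T^{2n})$.

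Next I would verify the hypotheses of Theorem \ref{GHtheorem}. The base $X$ is a flag manifold, hence compact and connected, and the structure group $T^{2n}$ is compact and connected. Since the Kähler--Einstein metric has positive scalar curvature we have $\lambda > 0$, so after restricting to $s \in [0,\lambda)$ (which does not affect the limit as $s \to \lambda$) the function $f(s) = \lambda - s$ is continuous, strictly positive, and satisfies $\lim_{s \to \lambda} f(s) = 0$ with $T = \lambda$. Finally ${\rm{Hol}}_{u}(\Theta)$ is closed in $T^{2n}$ by hypothesis. Theorem \ref{GHtheorem} then yields
\begin{equation*}
\lim_{s \to \lambda} d_{GH}\big(({\rm{U}}(E), d_{\Omega_{s}}), (T^{2n}/{\rm{Hol}}_{u}(\Theta), d_{g_{\lambda}})\big) = 0,
\end{equation*}
where $g_{\lambda}$ is the $T^{2n}$-invariant metric making the projection $T^{2n} \to T^{2n}/{\rm{Hol}}_{u}(\Theta)$ a Riemannian submersion.

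It remains to identify the limit space with $(T^{2n-l}, g)$. Because flag manifolds are simply connected, the holonomy group coincides with the restricted holonomy group, and is therefore connected; being a closed, connected subgroup of the torus $T^{2n}$, it is itself a subtorus ${\rm{Hol}}_{u}(\Theta) = T^{l}$ with $l = \dim {\rm{Hol}}_{u}(\Theta)$. The quotient of a torus by a subtorus is again a torus, so $T^{2n}/T^{l} \cong T^{2n-l}$, and the submersion metric $g_{\lambda}$ descends to exactly the normal metric $g$, giving the stated convergence. I expect the only genuinely delicate point to be this final identification, and in particular the need to invoke simple connectedness of $X$ to upgrade ``closed'' to ``closed and connected'': closedness alone only guarantees a closed subgroup of $T^{2n}$, which need not be a subtorus, whereas connectedness is what allows one to write the limit as a torus $T^{2n-l}$ carrying its normal metric. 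Everything else is a direct verification of the hypotheses of Theorem \ref{GHtheorem}.
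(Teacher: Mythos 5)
Your proof is correct and follows essentially the same route as the paper: recognize the Riemannian metric underlying $\Omega_s$ as an instance of Equation (\ref{GHmetricdef}) with $f(s)=\lambda-s$, apply Theorem \ref{GHtheorem}, and identify $T^{2n}/{\rm{Hol}}_{u}(\Theta)\cong T^{2n-l}$. Your explicit justification that ${\rm{Hol}}_{u}(\Theta)$ is connected (via simple connectedness of the flag manifold) is a welcome refinement: the paper simply asserts ``closed and connected'' without arguing for connectedness.
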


As we can see, the result above provides a description for the limiting behavior of the geometric flows of Hermitian metrics obtained in in Theorem \ref{Teorema2} and Theorem \ref{TeoremaBa}.

\section{Examples and final comments}

In what follows, we present some concrete examples which illustrate our main results. Throughout this section we consider the notations and conventions of Example \ref{exampleprojtangent}. We start by presenting examples of Chern-Ricci flow which do not preserve the $t$-Gauduchon Ricci-flat condition for $t < 1$.

\begin{example}[Chern-Ricci flow] Consider $X = {\mathbb{P}}(T_{{\mathbb{P}^{2}}}) = {\rm{SL}}_{3}(\mathbb{C})/B$. In this case, we have the Fano index ${\rm{I}}({\mathbb{P}}(T_{{\mathbb{P}^{2}}})) = 2$ and 
\begin{equation}
\mathcal{O}_{{\mathbb{P}}(T_{{\mathbb{P}^{2}}})}(1) = -\frac{1}{2}K_{{\mathbb{P}}(T_{{\mathbb{P}^{2}}})} = \mathscr{O}_{\alpha_{1}}(1) \otimes \mathscr{O}_{\alpha_{2}}(1).
\end{equation}
Considering the unique ${\rm{SU}}(3)$-invariant K\"{a}hler metric $\vartheta_{0} \in c_{1}({\mathbb{P}}(T_{{\mathbb{P}^{2}}}))$, such that $\vartheta_{0} = 2({\bf{\Omega}}_{\alpha_{1}} + {\bf{\Omega}}_{\alpha_{2}})$, let
\begin{equation}
{\rm{Pic}}^{0}_{\vartheta_{0}}({\mathbb{P}}(T_{{\mathbb{P}^{2}}})) = \big \langle \mathscr{O}_{\alpha_{1}}(-1)\otimes \mathscr{O}_{\alpha_{2}}(1) \big \rangle.
\end{equation}
For every $\ell \in \mathbb{Z}^{\times}$, we denote 
\begin{equation}
F_{\ell}:=  \mathscr{O}_{\alpha_{1}}(-\ell)\otimes \mathscr{O}_{\alpha_{2}}(\ell). 
\end{equation}
From above, for every $k,\ell \in \mathbb{Z}^{\times}$, we set
\begin{equation}
E_{k,\ell}:= \mathcal{O}_{{\mathbb{P}}(T_{{\mathbb{P}^{2}}})}(k) \oplus F_{\ell}. 
\end{equation}
Observing that $p_{\vartheta_{0}} = \vartheta_{0}$, for every $\lambda > 0$, we have a K\"{a}hler-Einstein metric $\omega_{0} = \lambda\vartheta_{0}$ on ${\mathbb{P}}(T_{{\mathbb{P}^{2}}})$. From these data, we have a principal $T^{2}$-bundle
\begin{equation}
T^{2} \hookrightarrow {\rm{U}}(E_{k,\ell}) \to {\mathbb{P}}(T_{{\mathbb{P}^{2}}}),
\end{equation}
and we can take a principal connection 
\begin{equation}
\label{pconnectiobundle}
\Theta = \begin{pmatrix} \sqrt{-1}\Theta_{1} & 0 \\
0 & \sqrt{-1}\Theta_{2}\end{pmatrix} \in \Omega({\rm{U}}(E_{k,\ell}),{\rm{Lie}}(T^{2})),
\end{equation}
such that 
\begin{equation}
d\Theta_{1} = \pi^{\ast}\big (k ({\bf{\Omega}}_{\alpha_{1}} + {\bf{\Omega}}_{\alpha_{2}})\big) \ \ \text{and} \ \ d\Theta_{2} = \pi^{\ast}\big (\ell ({\bf{\Omega}}_{\alpha_{2}} - {\bf{\Omega}}_{\alpha_{1}})\big).
\end{equation}
Now we can equip ${\rm{U}}(E_{k,\ell})$ with a 1-parameter family of Hermitian structure $(\Omega(s),J)$, such that 
\begin{equation}
\Omega(s) = (\lambda - s) \pi^{\ast}(\vartheta_{0}) + \Theta_{1} \wedge \Theta_{2},
\end{equation}
such that $s \in (-\infty,\lambda)$. From Theorem \ref{TeoremaA}, it follows that $\Omega(s)$ is a solution for the Chern-Ricci flow
\begin{equation}
\displaystyle \frac{\partial}{\partial s}\Omega(s)= -p(\Omega(s),1), \ \ \ \Omega(0) = \lambda \pi^{\ast}(\vartheta_{0}) + \Theta_{1} \wedge \Theta_{2}.
\end{equation}
Moreover, following Theorem \ref{TeoremaA}, we conclude that $\Omega(s)$ is $t$-Gauduchon Ricci-flat if, and only if,
\begin{equation*}
  t =1 - \frac{2(\lambda-s) {\rm{I}}({\mathbb{P}}(T_{{\mathbb{P}^{2}}}))^2}{k^2 \dim_{\mathbb{C}}({\mathbb{P}}(T_{{\mathbb{P}^{2}}}) )} = 1 - \frac{8}{3} \frac{(\lambda - s)}{k^{2}}.
\end{equation*}
In particular, we have the following cases
\begin{enumerate}
\item[(i)]$t=-1$: $\Omega(s)$ is Strominger-Bismut Ricci-flat if, and only if, $s = \lambda  - \frac{3}{4} k^{2}$;
\item[(ii)] $t=0$: $\Omega(s)$ is Lichnerowicz Ricci-flat if, and only if, $s = \lambda - \frac{3}{8}k^{2}$.
\end{enumerate}
It is worth pointing out that, for a fixed $k \in \mathbb{Z}^{\times}$, one can choose the Einstein constant $\lambda$ in a suitable way to produce examples of Chern-Ricci flow with initial condition satisfying the $t$-Gauduchon Ricci-flat equation.
\end{example}

Now we present an example which illustrates the results of Theorem \ref{TeoremaB} and Corollary \ref{Balanced}.

\begin{example}[Pluriclosed flow]
 Considering $X = {\mathbb{P}}(T_{{\mathbb{P}^{2}}}) = {\rm{SL}}_{3}(\mathbb{C})/B$ as before, for $a,b \in \mathbb{Z}^{\times}$, we take
 \begin{equation}
F_{a} = \mathscr{O}_{\alpha_{1}}(-a)\otimes \mathscr{O}_{\alpha_{2}}(a) \ \ \ \text{and} \ \ \ F_{b} = \mathscr{O}_{\alpha_{1}}(-b)\otimes \mathscr{O}_{\alpha_{2}}(b).
 \end{equation}
From above, we construct the rank  $2$ holomorphic vector bundle
\begin{equation}
E_{a,b} = F_{a} \oplus F_{b} \to {\mathbb{P}}(T_{{\mathbb{P}^{2}}}).
\end{equation}
Similarly to the previous example, we consider the principal $T^{2}$-bundle
\begin{equation}
T^{2} \hookrightarrow {\rm{U}}(E_{a,b}) \to {\mathbb{P}}(T_{{\mathbb{P}^{2}}}),
\end{equation}
equipped with a principal connection 
\begin{equation}
\Theta = \begin{pmatrix} \sqrt{-1}\Theta_{1} & 0 \\
0 & \sqrt{-1}\Theta_{2}\end{pmatrix} \in \Omega({\rm{U}}(E_{a,b}),{\rm{Lie}}(T^{2})),
\end{equation}
satisfying
\begin{equation}
d\Theta_{1} = \pi^{\ast}\big (a ({\bf{\Omega}}_{\alpha_{2}} - {\bf{\Omega}}_{\alpha_{1}})\big) \ \ \text{and} \ \ d\Theta_{2} = \pi^{\ast}\big (b ({\bf{\Omega}}_{\alpha_{2}} - {\bf{\Omega}}_{\alpha_{1}})\big).
\end{equation}
From Theorem \ref{TeoremaB}, it follows that ${\rm{U}}(E_{a,b})$ admits a $1$-parameter family of Hermitian structures $(\Omega(s),J)$, such that \begin{equation}
\Omega(s) = (\lambda - s) \pi^{\ast}(\vartheta_{0}) + \Theta_{1} \wedge \Theta_{2},
\end{equation}
with $s \in (-\infty,\lambda)$, is a solution for the geometric flow
\begin{equation}
\displaystyle \frac{\partial}{\partial s}\Omega(s)= -p(\Omega(s),t), \ \ \ \ \Omega(0) = \Omega_0,
\end{equation}
for all $t \in \mathbb{R}$. Moreover, we have in this case that
\begin{equation}
\delta \Omega(s) = 0, \ \ \forall s \in (-\infty,\lambda),
\end{equation}
i.e., $({\rm{U}}(E_{a,b}),\Omega(s),J)$ is a balanced Hermitian manifold, $\forall s \in (-\infty,\lambda)$.\\

In the above setting, one can also verify that 
\begin{equation}
p(\Omega(s),t) = p_{1}(\Omega(s),t),  \ \ \ \ \forall t \in \mathbb{R},
\end{equation}
see Eq. (\ref{(1,1)partGauduchon})). Thus, considering $\rho_{B}^{1,1}(\Omega(s)) = p_{1}(\Omega(s),-1)$, we obtain a solution to the pluriclosed flow 
\begin{equation}
\displaystyle \frac{\partial }{\partial s} \Omega(s)= -\rho_{B}^{1,1}(\Omega(s)), \ \ \ \ \Omega(0)=\Omega_0,
\end{equation}
such that $\delta \Omega(s) = 0$ for every $s \in (-\infty,\lambda)$.\\

We notice that, since ${\rm{U}}(E_{a,b})$ can not carry a K\"{a}hler metric for purely topological reasons, see for instance \cite{hofer1993remarks} and \cite{poddar2018group}, the Hermitian metric $\Omega(s)$ described above cannot be pluriclosed $\forall s \in (-\infty,\lambda)$ (e.g. \cite{popovici2013aeppli}). To the best of the author's knowledge, it is the first example in the literature of balanced non-pluriclosed solution to the pluriclosed flow.
\end{example}

Our last example illustrates the result of Corollary \ref{GHHermitian} by means of the pluriclosed flow presented in the above example.

\begin{example}[Gromov-Hausdorff convergence] Consider the pluriclosed flow $({\rm{U}}(E_{a,b}),\Omega(s))$ as in the previous example. Given $u \in {\rm{U}}(E_{a,b})$, let
\begin{equation}
P_{u}(\Theta) = \Big \{ x \in {\rm{U}}(E_{a,b}) \ \Big | \ x \sim u  \Big \}, 
\end{equation}
be the holonomy bundle of $\Theta$ (as in Eq. (\ref{pconnectiobundle})) at $u \in {\rm{U}}(E_{a,b})$, i.e., the set of points which can be joined to $u$ by a $\Theta$-horizontal path. By definition, the Lie algebra $\mathfrak{hol}_{u}(\Theta)$ of the holonomy group ${\rm{Hol}}_{u}(\Theta)$ is given by
\begin{equation}
\mathfrak{hol}_{u}(\Theta) = {\rm{Span}}_{\mathbb{R}}\big \{  d\Theta_{x}\big (V,W\big ) \ \  | \ \ x \in P_{u}(\Theta), \ \ V,W \in \ker(\Theta)_{x}\big\}.
\end{equation}
Now we notice that 
\begin{equation}
d\Theta =  \begin{pmatrix} \sqrt{-1}d\Theta_{1} & 0 \\
0 & \sqrt{-1}d\Theta_{2}\end{pmatrix} = \begin{pmatrix} \sqrt{-1}a\pi^{\ast}(\psi) & 0 \\
0 & \sqrt{-1}b\pi^{\ast}(\psi)\end{pmatrix},
\end{equation}
such that 
\begin{equation}
\psi = {\bf{\Omega}}_{\alpha_{2}} - {\bf{\Omega}}_{\alpha_{1}} \in \Omega^{1,1}({\mathbb{P}}(T_{{\mathbb{P}^{2}}})).
\end{equation}
Hence, we obtain the following description for the holonomy Lie algebra 
\begin{equation}
\mathfrak{hol}_{u}(\Theta) = {\rm{Span}}_{\mathbb{R}}\Bigg \{ \begin{pmatrix} \sqrt{1}r & 0 \\
0 & \sqrt{-1}r\frac{b}{a}\end{pmatrix} \ \Bigg | \ r \in \mathbb{R}\Bigg\}.
\end{equation}
From above, we can describe the holonomy group of $\Theta$ at $u \in {\rm{U}}(E_{a,b})$ as follows
\begin{equation}
{\rm{Hol}}_{u}(\Theta) = \Bigg \{ \begin{pmatrix} {\rm{e}}^{\sqrt{1}r} & 0 \\
0 & {\rm{e}}^{\sqrt{-1}r\frac{b}{a}}\end{pmatrix} \ \Bigg | \ r \in \mathbb{R}\Bigg\} \subset T^{2}.
\end{equation}
Observing that $a,b \in \mathbb{Z}^{\times}$ and that ${\mathbb{P}}(T_{{\mathbb{P}^{2}}})$ is simply connected, it follows that ${\rm{Hol}}_{u}(\Theta) \subset T^{2}$ is a closed connected Lie subgroup. Thus, from Corollary \ref{CoroF}, it follows that 
\begin{equation*}
    \lim_{s \rightarrow \lambda} d_{GH} \big(({\rm{U}}(E_{a,b}) , d_{\Omega_s}), (T^{2}/{\rm{Hol}}_{u}(\Theta), d_g )\big) =0,
\end{equation*}
that is, the pluriclosed flow $({\rm{U}}(E_{a,b}),\Omega(s))$ converges to $(T^{2}/{\rm{Hol}}_{u}(\Theta)) = (S^{1},g)$, in the Gromov-Hausdorff sense, as $s \to \lambda$.
\end{example}


\bibliographystyle{alpha}
\bibliography{ref.bib}

\end{document}